\DeclareMathAlphabet\gothic{U}{euf}{m}{n}
\def\eqnarray{\stepcounter{equation}\let\@currentlabel=\theequation
\global\@eqnswtrue
\tabskip\@centering\let\\=\@eqncr
$$\halign to \displaywidth\bgroup\hfil\global\@eqcnt\z@
  $\displaystyle\tabskip\z@{##}$&\global\@eqcnt\@ne
  \hfil$\displaystyle{{}##{}}$\hfil
  &\global\@eqcnt\tw@ $\displaystyle{##}$\hfil
  \tabskip\@centering&\llap{##}\tabskip\z@\cr}
\def\endeqnarray{\@@eqncr\egroup
      \global\advance\c@equation\m@ne$$\global\@ignoretrue}
\def\@yeqncr{\@ifnextchar [{\@xeqncr}{\@xeqncr[5pt]}}
\begin{document}
\bibliographystyle{tom}

\newtheorem{lemma}{Lemma}[section]
\newtheorem{thm}[lemma]{Theorem}
\newtheorem{cor}[lemma]{Corollary}
\newtheorem{prop}[lemma]{Proposition}
\newtheorem{ddefinition}[lemma]{Definition}

\theoremstyle{definition}

\newtheorem{remark}[lemma]{Remark}
\newtheorem{exam}[lemma]{Example}

\newcommand{\gota}{\gothic{a}}
\newcommand{\gotb}{\gothic{b}}
\newcommand{\gotc}{\gothic{c}}
\newcommand{\gote}{\gothic{e}}
\newcommand{\gotf}{\gothic{f}}
\newcommand{\gotg}{\gothic{g}}
\newcommand{\gothh}{\gothic{h}}
\newcommand{\gotk}{\gothic{k}}
\newcommand{\gotl}{\gothic{l}}
\newcommand{\gotm}{\gothic{m}}
\newcommand{\gotn}{\gothic{n}}
\newcommand{\gotp}{\gothic{p}}
\newcommand{\gotq}{\gothic{q}}
\newcommand{\gotr}{\gothic{r}}
\newcommand{\gots}{\gothic{s}}
\newcommand{\gott}{\gothic{t}}
\newcommand{\gotu}{\gothic{u}}
\newcommand{\gotv}{\gothic{v}}
\newcommand{\gotw}{\gothic{w}}
\newcommand{\gotz}{\gothic{z}}
\newcommand{\gotA}{\gothic{A}}
\newcommand{\gotB}{\gothic{B}}
\newcommand{\gotG}{\gothic{G}}
\newcommand{\gotL}{\gothic{L}}
\newcommand{\gotS}{\gothic{S}}
\newcommand{\gotT}{\gothic{T}}

\newcounter{teller}
\renewcommand{\theteller}{(\alph{teller})}
\newenvironment{tabel}{\begin{list}%
{\rm  (\alph{teller})\hfill}{\usecounter{teller} \leftmargin=1.1cm
\labelwidth=1.1cm \labelsep=0cm \parsep=0cm}
                      }{\end{list}}

\newcounter{tellerr}
\renewcommand{\thetellerr}{(\roman{tellerr})}
\newenvironment{tabeleq}{\begin{list}%
{\rm  (\roman{tellerr})\hfill}{\usecounter{tellerr} \leftmargin=1.1cm
\labelwidth=1.1cm \labelsep=0cm \parsep=0cm}
                         }{\end{list}}

\newcounter{tellerrr}
\renewcommand{\thetellerrr}{(\Roman{tellerrr})}
\newenvironment{tabelR}{\begin{list}%
{\rm  (\Roman{tellerrr})\hfill}{\usecounter{tellerrr} \leftmargin=1.1cm
\labelwidth=1.1cm \labelsep=0cm \parsep=0cm}
                         }{\end{list}}

\newcounter{proofstep}
\newcommand{\nextstep}{\refstepcounter{proofstep}\vertspace \par 
          \noindent{\bf Step \theproofstep.} \hspace{5pt}}
\newcommand{\firststep}{\setcounter{proofstep}{0}\nextstep}

\newcommand{\Ni}{\mathds{N}}
\newcommand{\Qi}{\mathds{Q}}
\newcommand{\Ri}{\mathds{R}}
\newcommand{\Ci}{\mathds{C}}
\newcommand{\Ti}{\mathds{T}}
\newcommand{\Zi}{\mathds{Z}}
\newcommand{\Fi}{\mathds{F}}

\renewcommand{\proofname}{{\bf Proof}}

\newcommand{\vertspace}{\vskip10.0pt plus 4.0pt minus 6.0pt}

\newcommand{\simh}{{\stackrel{{\rm cap}}{\sim}}}
\newcommand{\ad}{{\mathop{\rm ad}}}
\newcommand{\Ad}{{\mathop{\rm Ad}}}
\newcommand{\alg}{{\mathop{\rm alg}}}
\newcommand{\clalg}{{\mathop{\overline{\rm alg}}}}
\newcommand{\Aut}{\mathop{\rm Aut}}
\newcommand{\arccot}{\mathop{\rm arccot}}
\newcommand{\capp}{{\mathop{\rm cap}}}
\newcommand{\rcapp}{{\mathop{\rm rcap}}}
\newcommand{\diam}{\mathop{\rm diam}}
\newcommand{\divv}{\mathop{\rm div}}
\newcommand{\dom}{\mathop{\rm dom}}
\newcommand{\codim}{\mathop{\rm codim}}
\newcommand{\RRe}{\mathop{\rm Re}}
\newcommand{\IIm}{\mathop{\rm Im}}
\newcommand{\tr}{{\mathop{\rm tr \,}}}
\newcommand{\Tr}{{\mathop{\rm Tr \,}}}
\newcommand{\Vol}{{\mathop{\rm Vol}}}
\newcommand{\card}{{\mathop{\rm card}}}
\newcommand{\rank}{\mathop{\rm rank}}
\newcommand{\supp}{\mathop{\rm supp}}
\newcommand{\sgn}{\mathop{\rm sgn}}
\newcommand{\essinf}{\mathop{\rm ess\,inf}}
\newcommand{\esssup}{\mathop{\rm ess\,sup}}
\newcommand{\Int}{\mathop{\rm Int}}
\newcommand{\lcm}{\mathop{\rm lcm}}
\newcommand{\loc}{{\rm loc}}
\newcommand{\HS}{{\rm HS}}
\newcommand{\Trn}{{\rm Tr}}
\newcommand{\n}{{\rm N}}
\newcommand{\WOT}{{\rm WOT}}

\newcommand{\at}{@}

\newcommand{\mod}{\mathop{\rm mod}}
\newcommand{\spann}{\mathop{\rm span}}
\newcommand{\one}{\mathds{1}}

\hyphenation{groups}
\hyphenation{unitary}

\newcommand{\tfrac}[2]{{\textstyle \frac{#1}{#2}}}

\newcommand{\ca}{{\cal A}}
\newcommand{\cb}{{\cal B}}
\newcommand{\cc}{{\cal C}}
\newcommand{\cd}{{\cal D}}
\newcommand{\ce}{{\cal E}}
\newcommand{\cf}{{\cal F}}
\newcommand{\ch}{{\cal H}}
\newcommand{\chs}{{\cal HS}}
\newcommand{\ci}{{\cal I}}
\newcommand{\ck}{{\cal K}}
\newcommand{\cl}{{\cal L}}
\newcommand{\cm}{{\cal M}}
\newcommand{\cn}{{\cal N}}
\newcommand{\co}{{\cal O}}
\newcommand{\cp}{{\cal P}}
\newcommand{\cs}{{\cal S}}
\newcommand{\ct}{{\cal T}}
\newcommand{\cx}{{\cal X}}
\newcommand{\cy}{{\cal Y}}
\newcommand{\cz}{{\cal Z}}

\newlength{\hightcharacter}
\newlength{\widthcharacter}
\newcommand{\covsup}[1]{\settowidth{\widthcharacter}{$#1$}\addtolength{\widthcharacter}{-0.15em}\settoheight{\hightcharacter}{$#1$}\addtolength{\hightcharacter}{0.1ex}#1\raisebox{\hightcharacter}[0pt][0pt]{\makebox[0pt]{\hspace{-\widthcharacter}$\scriptstyle\circ$}}}
\newcommand{\cov}[1]{\settowidth{\widthcharacter}{$#1$}\addtolength{\widthcharacter}{-0.15em}\settoheight{\hightcharacter}{$#1$}\addtolength{\hightcharacter}{0.1ex}#1\raisebox{\hightcharacter}{\makebox[0pt]{\hspace{-\widthcharacter}$\scriptstyle\circ$}}}
\newcommand{\scov}[1]{\settowidth{\widthcharacter}{$#1$}\addtolength{\widthcharacter}{-0.15em}\settoheight{\hightcharacter}{$#1$}\addtolength{\hightcharacter}{0.1ex}#1\raisebox{0.7\hightcharacter}{\makebox[0pt]{\hspace{-\widthcharacter}$\scriptstyle\circ$}}}

\thispagestyle{empty}

\vspace*{1cm}
\begin{center}
{\Large\bf Analyticity of the Dirichlet-to-Neumann semigroup \\[2mm]
on continuous functions 
\\[10mm]
\large A.F.M. ter Elst$^1$ and E.M. Ouhabaz$^2$}

\end{center}

\vspace{5mm}

\begin{center}
{\bf Abstract}
\end{center}

\begin{list}{}{\leftmargin=1.8cm \rightmargin=1.8cm \listparindent=10mm 
   \parsep=0pt}
\item
Let $\Omega$ be a bounded open subset with $C^{1+\kappa}$-boundary for some $\kappa > 0$.
Consider the Dirichlet-to-Neumann operator associated to the 
elliptic operator $- \sum \partial_l ( c_{kl} \, \partial_k ) + V$,
where the $c_{kl} = c_{lk}$ are  H\"older continuous 
and $V \in L_\infty(\Omega)$ are real valued.
We prove that the  Dirichlet-to-Neumann operator generates a $C_0$-semigroup
on  the space $C(\partial \Omega)$ 
which is in addition holomorphic with angle $\frac{\pi}{2}$. 
We also show that the kernel of the semigroup has Poisson bounds 
on the complex right half-plane.
As a consequence we obtain an optimal holomorphic functional calculus
and maximal regularity on $L_p(\Gamma)$ for all $p \in (1,\infty)$.
\end{list}

\vspace{5mm}
\noindent
July 2017

\vspace{5mm}
\noindent
AMS Subject Classification: 47D06, 35K08.

\vspace{5mm}
\noindent
Keywords: Dirichlet-to-Neumann operator, Poisson bounds, 
$C_0$-semigroup, holomorphic semigroup.

\vspace{15mm}

\noindent
{\bf Home institutions:}    \\[3mm]
\begin{tabular}{@{}cl@{\hspace{10mm}}cl}
1. & Department of Mathematics  & 
  2. & Institut de Math\'ematiques de Bordeaux \\
& University of Auckland   & 
  & Universit\'e de Bordeaux, UMR 5251,  \\
& Private bag 92019 & 
  &351, Cours de la Lib\'eration  \\
& Auckland 1142 & 
  &  33405 Talence \\
& New Zealand  & 
  & France\\
  & terelst@math.auckland.ac.nz&
 & Elmaati.Ouhabaz@math.u-bordeaux.fr \\[8mm]
\end{tabular}

\newpage

\section{Introduction} \label{Sdtnpcont1}

The Dirichlet-to-Neumann operator $\cn$ is a self-adjoint 
operator in $L_2(\Gamma)$, where $\Gamma$ is the boundary of a
bounded set $\Omega \subset \Ri^d$ with Lipschitz boundary.
It is defined with respect to a symmetric differential operator on $\Omega$.
The Dirichlet-to-Neumann operator occurs in inverse problems 
(the Calder\'on problem) and attracted recently a lot of 
analytic interest \cite{Esc1},  \cite{Eng}, \cite{ArM2},
 \cite{BeE1},  \cite{AE3}, 
\cite{GG}, \cite{EO4}, \cite{EO6} and the references therein. 
Under some conditions on the boundary regularity of $\Omega$ 
and on the coefficients of the symmetric differential operator on $\Omega$
it follows that the semigroup $S$ generated by $-\cn$ extends consistently to a 
$C_0$-semigroup on $L_p$ for all $p \in [1,\infty)$.
In this paper we show that if $\Omega$ is a $C^{1+\kappa}$-domain 
and if the coefficients of the symmetric differential operator on $\Omega$
are H\"older continuous, then the semigroup $S$ leaves $C(\Gamma)$ 
invariant and the restriction of $S$ to $C(\Gamma)$ is a $C_0$-semigroup
on $C(\Gamma)$.  We prove in addition that this semigroup  is holomorphic with the optimal angle $\frac{\pi}{2}$.

For the precise statement of the theorem we need to introduce some notation.
Let $\Omega \subset \Ri^d$ be a bounded open set with Lipschitz boundary.
Write $\Gamma = \partial \Omega$.
Further, for all $k,l \in \{ 1,\ldots,d \} $ let $c_{kl} \colon \Omega \to \Ri$ 
be a bounded measurable function such that $c_{kl} = c_{lk}$
for all $k,l \in \{ 1,\ldots,d \} $.
We assume ellipticity, that is there exists a $\mu > 0$ such that 
\[
\sum_{k,l=1}^d c_{kl}(x) \, \xi_k \, \overline{\xi_l}
\geq \mu \, |\xi|^2
\]
for all $\xi \in \Ci^d$ and $x \in \Omega$.
Let $A_D = - \sum_{k,l=1}^d \partial_l ( c_{kl} \, \partial_k )$
subject to Dirichlet boundary conditions.
Let $V \colon \Omega \to \Ri$ be a bounded measurable function.
We always assume that the operator $A_D + V$ is invertible.

If $u \in W^{1,2}(\Omega)$ and $\psi \in L_2(\Gamma)$, 
then we say that $u$ has {\bf weak conormal derivative} $\psi$ if 
\[
\sum_{k,l=1}^d \int_\Omega c_{kl} \, (\partial_k u) \, \overline{\partial_l v}
   + \int_\Omega V \, u \, \overline v 
= \int_\Gamma \psi \, \overline{\Tr v}
\]
for all $v \in W^{1,2}(\Omega)$.
In that case we write $\partial_\nu u = \psi$. Formally, 
\[
\partial_\nu u = \sum_{k,l=1}^d n_l \, c_{kl} \, \partial_l u, 
\]
where $(n_1, \ldots, n_d)$ is the outer normal vector to $\Omega$. 

We define briefly the Dirichlet-to-Neumann operator $\cn_V$.
Let $\varphi \in H^{1/2}(\Gamma)$.
Since the operator $A_D + V$ is invertible we can uniquely solve the Dirichlet problem
\begin{eqnarray*}
-\sum_{k,l= 1}^d \partial_l \Big( c_{kl} \partial_k\,  u \Big) + V \, u & = & 0 
      \quad \mbox{weakly on } \Omega,   \\[0pt]
\Tr u & = & \varphi \nonumber
\end{eqnarray*}
with $u \in W^{1,2}(\Omega)$.
If $u$ has a weak conormal derivative, then $\varphi \in D(\cn_V)$ and 
$\cn_V \varphi = \partial_\nu u$.
The operator $\cn_V$ is a lower-bounded self-adjoint operator on $L_2(\Gamma)$.
For more details see \cite{EO4} Section~2  or \cite{EO6} Section~2.
Let $S^V$ be the $C_0$-semigroup on $L_2(\Gamma)$ generated by 
$- \cn_V$.
Obviously the semigroup $S^V$ is holomorphic on $L_2(\Gamma)$ with 
angle~$\frac{\pi}{2}$.
Our contribution in this paper is threefold.

\begin{enumerate}
\item
We prove that $S^V$ extends consistently to a holomorphic $C_0$-semigroup
on $L_p(\Gamma)$ with angle $\frac{\pi}{2}$ for all $p \in [1,\infty)$.

\item
If $p \in (1,\infty)$ and $\lambda > 0$ is large enough, then $\cn_V + \lambda \, I$
has a bounded $H^\infty(\Sigma(\theta))$-functional calculus on $L_p(\Gamma)$
for all $\theta \in (0,\frac{\pi}{2})$.
Moreover, $\cn_V$ has maximal $L_r$-regularity on $L_p(\Gamma)$ for all $r \in (1,\infty)$.

\item
The central point of this paper is to prove existence and holomorphy of the 
semigroup on the space of continuous functions $C(\Gamma)$.
\end{enumerate}

The following is our main theorem.

\begin{thm} \label{tdtnpcont101}
Adopt the above assumptions and definitions.
Suppose that there exists a $\kappa > 0$ such that 
$\Omega$ has a $C^{1 + \kappa}$-boundary.
Further suppose that the $c_{kl}$ are H\"older continuous.
Then $S^V$ leaves the space $C(\Gamma)$ invariant.
Moreover, $(S^V_t|_{C(\Gamma)})_{t > 0}$ is a $C_0$-semigroup
on $C(\Gamma)$ which is holomorphic with angle $\frac{\pi}{2}$.
\end{thm}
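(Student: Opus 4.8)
The plan is to realise $S^V$ through its integral kernel and to reduce the theorem to pointwise kernel bounds. On $L_2(\Gamma)$ the operator $\cn_V$ is self-adjoint and lower bounded, so $S^V$ extends to a holomorphic semigroup on the open right half-plane with $\|S^V_z\|_{L_2\to L_2}\le e^{\omega\RRe z}$, and for each $z$ with $\RRe z>0$ the operator $S^V_z$ has a kernel $K_z$ on $\Gamma\times\Gamma$. The whole statement follows once one proves \emph{Poisson bounds on the complex right half-plane}: for every $\theta\in(0,\frac{\pi}{2})$ there are $C_\theta,\omega_\theta\ge0$ with
\[
|K_z(x,y)|\le C_\theta\,e^{\omega_\theta|z|}\,\frac{|z|}{\bigl(|z|+|x-y|\bigr)^{d}}
\qquad\bigl(z\in\Sigma(\theta),\ x,y\in\Gamma\bigr),
\]
together with a matching local H\"older estimate for $K_z$ in the space variables. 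I expect this to be the main obstacle. To obtain it I would first prove the on-diagonal bound $\|S^V_z\|_{L_1\to L_\infty}\le C_\theta\,e^{\omega_\theta|z|}(\RRe z)^{-(d-1)}$ — for real $z$ this is the Poisson-type heat kernel estimate for $\cn_V$, which the $C^{1+\kappa}$-regularity of $\Omega$ and the H\"older continuity of the $c_{kl}$ are exactly there to provide — then establish Davies--Gaffney-type off-diagonal $L_2$-estimates for $S^V_z$ by conjugating $S^V_z$ (or the resolvent of $\cn_V$) with $e^{\rho}$ for bounded Lipschitz weights $\rho$ and estimating the commutator via the harmonic-extension description of $\cn_V$, and finally combine the two by the standard Davies-trick and optimisation over the complex parameter to reach the pointwise bound above uniformly on each subsector; the H\"older estimate then comes from elliptic regularity up to the boundary for the underlying problem on $\Omega$.

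Granting the Poisson bounds, the rest is comparatively soft. Since $\Gamma$ is compact, integrating the bound in $y$ gives $\sup_{x\in\Gamma}\|K_z(x,\cdot)\|_{L_1(\Gamma)}\le C_\theta\,e^{\omega_\theta|z|}$ for $z\in\Sigma(\theta)$, and with the H\"older estimate this makes $x\mapsto K_z(x,\cdot)$ continuous from $\Gamma$ into $L_1(\Gamma)$; hence $S^V_zf=\int_\Gamma K_z(\cdot,y)\,f(y)\,d\sigma(y)$ lies in $C(\Gamma)$ for every $f\in C(\Gamma)$, with $\|S^V_zf\|_{C(\Gamma)}\le C_\theta\,e^{\omega_\theta|z|}\|f\|_{C(\Gamma)}$. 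So $S^V$ leaves $C(\Gamma)$ invariant and $(S^V_z|_{C(\Gamma)})$ is a semigroup that is bounded on every bounded part of every subsector $\Sigma(\theta)$, $\theta<\frac{\pi}{2}$. Holomorphy in $z$ of $z\mapsto S^V_zf\in C(\Gamma)$ then follows from the $L_2$-holomorphy of $S^V$ together with these uniform $C(\Gamma)$-bounds and the equicontinuity of $\{S^V_zf\}$ coming from the H\"older estimate: for fixed $x$, $z\mapsto(S^V_zf)(x)$ is a locally uniform limit of the holomorphic maps $z\mapsto|B_r(x)\cap\Gamma|^{-1}\langle S^V_zf,\one_{B_r(x)\cap\Gamma}\rangle$, hence holomorphic with Cauchy estimates uniform in $x$, so $z\mapsto S^V_zf$ is locally Lipschitz and holomorphic as a $C(\Gamma)$-valued map. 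Thus $(S^V_z|_{C(\Gamma)})_{z\in\Sigma(\pi/2)}$ is a locally bounded holomorphic semigroup of angle $\frac{\pi}{2}$ on $C(\Gamma)$.

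It remains to prove strong continuity at $0$, i.e.\ $S^V_tf\to f$ in $C(\Gamma)$ as $t\downarrow0$ for all $f\in C(\Gamma)$; by $\sup_{t\in(0,1]}\|S^V_t\|_{C(\Gamma)\to C(\Gamma)}<\infty$ it suffices to do this on a dense subspace. For large real $\mu$ the resolvent $(\mu I+\cn_V)^{-1}$ is the boundary trace of the solution operator of the Robin problem $-\sum_{k,l}\partial_l(c_{kl}\partial_ku)+Vu=0$ in $\Omega$, $\partial_\nu u+\mu\,\Tr u=g$ on $\Gamma$; comparing $u$ with $\max_{\overline\Omega}|u|$ (immediate when $V\ge0$ from the maximum principle and the boundary-point estimate, and in general after writing $\cn_V=\cn_0+(\cn_V-\cn_0)$ with $\cn_V-\cn_0$ bounded on $C(\Gamma)$) yields $\|(\mu I+\cn_V)^{-1}\|_{C(\Gamma)\to C(\Gamma)}\le C\mu^{-1}$ for $\mu$ large. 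Hence $\mu(\mu I+\cn_V)^{-1}g\to g$ in $C(\Gamma)$ — first for smooth $g$, writing $\mu(\mu I+\cn_V)^{-1}g-g=-(\mu I+\cn_V)^{-1}\cn_V g$, then for all $g$ by density and the uniform bound — so the range $\cd$ of $(\mu I+\cn_V)^{-1}$ on $C(\Gamma)$ is dense. For $\varphi\in\cd$ one has $\cn_V\varphi\in C(\Gamma)$ and $S^V_t\varphi-\varphi=-\int_0^tS^V_s\cn_V\varphi\,ds$ (the identity passing from $L_2$ to $C(\Gamma)$ by injectivity of the embedding), so $\|S^V_t\varphi-\varphi\|_{C(\Gamma)}\le t\bigl(\sup_{0<s\le t}\|S^V_s\|_{C(\Gamma)\to C(\Gamma)}\bigr)\|\cn_V\varphi\|_{C(\Gamma)}\to0$. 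This gives the strong continuity, and with it $(S^V_t|_{C(\Gamma)})_{t>0}$ is a $C_0$-semigroup on $C(\Gamma)$, holomorphic with angle $\frac{\pi}{2}$.
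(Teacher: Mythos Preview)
Your overall architecture matches the paper's: the theorem is reduced to (i) Poisson bounds for the kernel $K_z$ uniformly on each subsector $\Sigma(\theta)$, $\theta<\frac{\pi}{2}$, from which $C(\Gamma)$-invariance, sectorial boundedness and holomorphy follow, and (ii) a separate argument for strong continuity at $0$. The paper makes the same reduction; the differences lie in how each of (i) and (ii) is carried out.

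For the complex Poisson bounds you sketch a Davies--Gaffney approach: on-diagonal $L_1\!\to\!L_\infty$ bounds plus weighted $L_2$ off-diagonal estimates, optimised over the weight. The paper instead proceeds by iteration: starting from the real-time Poisson bound (quoted from the companion paper), it applies the Duong--Robinson extrapolation to obtain a weaker pointwise bound on a small sector $\Sigma(\frac{\pi}{2d})$, and then re-runs the machinery that produced the real-time Poisson bound (boundedness of the commutators $[\cn_V,M_g]$ for Lipschitz $g$ and the pointwise Schwartz-kernel estimate $|K_{\cn_V}(w_1,w_2)|\le c\,|w_1-w_2|^{-d}$) for the rotated generator $z_0\,\cn_V$; this upgrades the weak bound to a full Poisson bound on that small sector, and iterating opens the sector to $\frac{\pi}{2}$. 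Your Davies approach is plausible for a first-order operator (the commutator $[\cn_V,M_g]$ being bounded is exactly the right input), but you should be aware that the standard Davies trick yields Gaussian-type decay; extracting the correct Poisson profile requires the first-order commutator structure explicitly, and your sketch leaves this step undeveloped.

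For $C(\Gamma)$-invariance the paper avoids your H\"older-in-$x$ kernel estimate entirely: it shows each eigenfunction $\varphi_n$ of $\cn_V$ is continuous on $\Gamma$ (elliptic boundary regularity on $\Omega$, Nittka), and then proves the eigenfunction expansion $K_t=\sum e^{-\lambda_n t}\varphi_n\otimes\overline{\varphi_n}$ converges uniformly in $C(\Gamma\times\Gamma)$; thus $K_t$ is jointly continuous and $S^V_t(L_\infty)\subset C(\Gamma)$ for real $t$, which then transfers to complex $z$ by writing $S^V_z=S^V_t S^V_{z-t}$. This is cleaner than proving kernel H\"older regularity directly.

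For strong continuity your route (resolvent bounds via the Robin problem, reduction to $V=0$ by the bounded perturbation $\cn_V-\cn_0$) and the paper's route (direct estimate of $T_tP|_\Gamma-P|_\Gamma$ via the Poisson bound and the identity $\int_\Gamma K_t(w_1,\cdot)=1$ when $V=0$, then the same bounded perturbation) both hinge on the fact that $\cn_V-\cn_0$ is bounded on $C(\Gamma)$, which the paper extracts from its companion paper. Your argument is correct in outline; the paper's is shorter because the real-time Poisson bound immediately controls $\|T_tP|_\Gamma-P|_\Gamma\|_\infty\le c\,t^{1/2}$ without passing through the resolvent.
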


This theorem has been proved by Engel \cite{Eng}, Theorem~2.1
for the classical Dirichlet-to-Neumann operator, that is if $c_{kl} = \delta_{kl}$
and $V = 0$ and if $\Omega$ has a $C^\infty$-boundary. 
In this case, $\cn$ is the 
perturbation of the square root of the Laplace--Beltrami operator on 
$\partial \Omega$ 
by a pseudo-differential operator of order zero. 
Hence generation and analyticity of the semigroup can be treated by 
perturbation arguments.  

In case $\Omega$ has a $C^\infty$-boundary and both the coefficients $c_{kl}$ 
and potential $V$ are $C^\infty$-functions, then Escher \cite{Esc1} 
states that $(S^V_t|_{C(\Gamma)})_{t > 0}$ is a $C_0$-semigroup
on $C(\Gamma)$ which is holomorphic on a sector with  some positive angle. 
Note, however, that  
a positivity condition is 
missing in \cite{Esc1}, see the discussion before Theorem~6.3 in \cite{DGK2}.

If $\Omega$ is a ball in $\Ri^2$, one considers the Laplacian
$c_{kl} = \delta_{kl}$ and $V$ is constant, with $0 \not\in \sigma(A_D + V)$,
then Daners--Gl\"uck--Kennedy \cite{DGK2}, Theorem~6.3, proved that 
$(S^V_t|_{C(\Gamma)})_{t > 0}$ is a $C_0$-semigroup
on $C(\Gamma)$.  No  holomorphy is proved there. 

If $\Omega$ has a $C^{1,1}$-boundary (in $\Ri^d$), 
one considers again the Laplacian $c_{kl} = \delta_{kl}$
and $V \in L_\infty(\Omega,\Ri)$, with $0 \not\in \sigma(A_D + V)$,
then Arendt--ter Elst \cite{AE8}, Theorem~5.3(b) showed that 
$(S^V_t|_{C(\Gamma)})_{t > 0}$ is a $C_0$-semigroup
on $C(\Gamma)$.  No  holomorphy is proved in that paper. 

If the $c_{kl}$ are Lipschitz continuous, $V \geq 0$ and $\Omega$ 
merely has a Lipschitz boundary, 
then Arendt--ter Elst \cite{AE8}, Theorem~5.3(a) showed that 
$S^V$ leaves the space $C(\Gamma)$ invariant and that 
$(S^V_t|_{C(\Gamma)})_{t > 0}$ is a $C_0$-semigroup
on $C(\Gamma)$.
Actually, $V$ is allowed to be slightly negative  and 
the condition in Theorem~5.3(c) of that paper is that the operator 
$A_D + V$ is positive.
There is no holomorphy obtained for the semigroup in \cite{AE8}.

In this paper the condition on the boundary is stronger than in \cite{AE8}, Theorem~5.3(c),
since we require a $C^{1+\kappa}$-boundary instead of a Lipschitz boundary.
On the other hand, the condition of the principal coefficients is weaker, 
that is H\"older continuity instead of Lipschitz continuity. 
Moreover, $V$ is allowed to be (very) negative, we only require
that $V$ is real measurable bounded and $0 \not\in \sigma(A_D + V)$.
In addition we prove analyticity of the semigroup with optimal 
angle equal to~$\frac{\pi}{2}$.

Under the conditions of Theorem~1.1 the semigroup $S^V$ has a kernel satisfying Poisson
bounds for positive  time, see \cite{EO6}, Theorem~1.1. We rely  heavily on this result. 
In Section~\ref{Sdtnpcont2} we state the Poisson bounds and use them to 
prove that the restriction of $S^V$ to $C(\Gamma)$ is a $C_0$-semigroup. The fact that 
$S^V$ leaves $C(\Gamma)$ invariant follows from elliptic regularity. 
In Section~\ref{Sdtnpcont3} we use iteration to deduce that 
the Poisson bounds extend to any sector in the complex with angle 
strictly less than $\frac{\pi}{2}$.
Then the analyticity on the open right half-plane follows. 
Our approach gives that the semigroup
$S^V$ extends consistently to a holomorphic semigroup on $L_p(\Gamma)$ with angle 
$\frac{\pi}{2}$ for all $p \in [1, \infty)$. 
We also take advantage of the Poisson bounds in the complex half-plane to obtain
a holomorphic functional calculus, bounded imaginary powers and maximal regularity on 
$L_p(\Gamma)$ for all $p \in (1,\infty)$.

\section{$C_0$-semigroup on $C(\Gamma)$} \label{Sdtnpcont2}

In this section we prove a part of the main theorem.
It concerns the fact that the operator $-\cn_V$ generates a strongly continuous semigroup on $C(\Gamma)$. 
One of the main ingredients in the proof is the following Poisson bound proved 
recently in \cite{EO6}. 
The assumptions here are the same as in 
Theorem \ref{tdtnpcont101}. 

\begin{thm} \label{tdtnpcont201} 
Suppose $\Omega \subset \Ri^d$ is bounded connected 
with a $C^{1+ \kappa}$-boundary 
$\Gamma$ for some $\kappa \in (0,1)$.
Suppose also each $c_{kl} = c_{lk}$ is real valued and 
H\"older continuous on~$\Omega$.
Let $V \in L_\infty(\Omega,\Ri)$ and suppose that 
$0 \notin\sigma(A_D +V)$.
Denote by $\cn_V$ the corresponding Dirichlet-to-Neumann operator. 
Then the semigroup $S^V = (S^V_t)_{t \ge 0}$ generated by $-\cn_V$ on $L_2(\Gamma)$ 
has a kernel $K^V$ and  there exists a $c > 0$ such that 
\[
| K^V_t(w_1,w_2) | 
\leq \frac{c \, (t \wedge 1)^{-(d-1)} \, e^{-\lambda_1 t}}
         {\displaystyle \Big( 1 + \frac{|w_1-w_2|}{t} \Big)^d }
\]
for all $w_1, w_2 \in \Gamma$ and $t > 0$, where $\lambda_1 $ is the 
first eigenvalue of the operator~$\cn_V$.
\end{thm}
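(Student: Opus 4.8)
\emph{Proof proposal.} The plan is to combine an on-diagonal (ultracontractivity) bound with a small-time off-diagonal analysis, handling large times separately. \emph{First,} since $\cn_V$ is self-adjoint and bounded below with form domain $H^{1/2}(\Gamma)$ --- which embeds compactly into $L_2(\Gamma)$ and continuously into $L_q(\Gamma)$ for the Sobolev exponent attached to the dimension $d-1$ of the compact $C^{1+\kappa}$-manifold $\Gamma$ --- its form satisfies a Nash inequality appropriate for a first-order operator. The usual ultracontractivity machinery then yields $\|S^V_t\|_{L_1(\Gamma)\to L_\infty(\Gamma)}\le c\,(t\wedge 1)^{-(d-1)}e^{-\lambda_1 t}$: the power $t^{-(d-1)}$ comes from the Nash inequality on the $(d-1)$-dimensional boundary, and the decay $e^{-\lambda_1 t}$ from $\|S^V_t\|_{2\to2}=e^{-\lambda_1 t}$ together with $S^V_t=(S^V_{t/3})^3$. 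By the Dunford--Pettis theorem this produces the kernel $K^V$ and the asserted bound without the Poisson factor.

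\emph{Second,} I would reduce the Poisson factor to small times. Since $\Gamma$ is bounded one has $(1+|w_1-w_2|/t)^{-d}\ge(1+\diam \Gamma)^{-d}$ for $t\ge1$, so for $t\ge1$ it suffices to show $|K^V_t(w_1,w_2)|\le c\,e^{-\lambda_1 t}$; this follows from the reproducing identity $K^V_t(w_1,w_2)=\int_\Gamma\int_\Gamma K^V_{1/2}(w_1,z_1)K^V_{t-1}(z_1,z_2)K^V_{1/2}(z_2,w_2)\,dz_1\,dz_2$, the estimate $\|S^V_{t-1}\|_{2\to2}=e^{-\lambda_1(t-1)}$, and $\|K^V_{1/2}(w,\cdot)\|_2^2=K^V_1(w,w)\le c$. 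There then remains, for $t\in(0,1]$, the bound $|K^V_t(w_1,w_2)|\le c\,t^{-(d-1)}(1+|w_1-w_2|/t)^{-d}\asymp c\,t\,(t+|w_1-w_2|)^{-d}$.

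\emph{Third,} this small-time estimate is the heart of the matter, and it cannot be reached by soft means: $\cn_V$ being of first order, Davies' exponential perturbation method has only a finite critical slope, so the Davies--Gaffney $L_2$ off-diagonal estimates it provides decay --- even combined with the on-diagonal bound --- only like $(1+|w_1-w_2|/t)^{-1}$, whereas the sharp rate is $d$. Instead I would exploit the structure of the Dirichlet-to-Neumann map by localizing near $\Gamma$: after a partition of unity, flattening of the boundary and freezing of the coefficients $c_{kl}$, the model operator is the Dirichlet-to-Neumann operator of a constant-coefficient elliptic operator on a half-space, whose semigroup is --- by separation of variables --- the Poisson (harmonic extension) semigroup, with kernel the explicit half-space Poisson kernel at height $t$, satisfying exactly $\asymp t\,(t+|w_1-w_2|)^{-d}$. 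Equivalently, one compares $K^V_t(w_1,w_2)$ with the Poisson kernel $P(x,w_1)$ of the elliptic operator in $\Omega$, for $x$ at distance of order $t$ from $w_2$ along the inward normal, using the sharp bound $P(x,w)\le c\,\mathop{\rm dist}(x,\Gamma)\,|x-w|^{-d}$ (valid on $C^{1+\kappa}$-domains with H\"older coefficients, by boundary elliptic regularity and boundary Harnack estimates). The errors due to the variation of the $c_{kl}$, the curvature of $\Gamma$, the potential $V$ and the nonlocality of the harmonic extension would be absorbed by a Duhamel/Neumann-series iteration gaining a positive power $t^{\kappa'}$ at each step --- precisely where H\"older continuity of the $c_{kl}$ and $C^{1+\kappa}$-regularity of $\Gamma$ enter --- while preserving the Poisson-type pointwise estimate; summing the series for $t\in(0,1]$ finishes the argument.

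\emph{The main obstacle} is this third step. Because $\cn_V$ is first-order, the Nash/Davies--Gaffney machinery only gives a suboptimal off-diagonal rate, and because the coefficients are merely H\"older continuous one cannot fall back on the pseudodifferential calculus that would write $\cn_V$ as $\sqrt{-\Delta_\Gamma}$ plus a zero-order operator. The sharp pointwise Poisson bound must therefore be built by hand --- through a parametrix construction for the Dirichlet-to-Neumann semigroup, or equivalently through sharp pointwise bounds for the elliptic Poisson kernel combined with a localization argument --- with the error series controlled quantitatively via the H\"older and $C^{1+\kappa}$ hypotheses.
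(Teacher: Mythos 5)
You should first be aware that this paper does not prove Theorem~\ref{tdtnpcont201} at all: it is imported verbatim from \cite{EO6} (Theorem~1.1 there), and the surrounding text says explicitly that the authors ``rely heavily on this result.'' So there is no internal proof to compare against, and any proof attempt here is really an attempt to reprove the main theorem of a separate paper. Judged on its own terms, your first two steps are sound and standard: the Nash/ultracontractivity argument on the $(d-1)$-dimensional boundary, using that the form domain of $\cn_V$ is $H^{1/2}(\Gamma)$, does give $\|S^V_t\|_{1\to\infty}\le c\,(t\wedge1)^{-(d-1)}e^{-\lambda_1 t}$ (with the mild caveat that $V$ may be very negative, so $\cn_V$ is only bounded below and $\lambda_1$ may be negative; the factorization $S^V_t=S^V_{t/3}S^V_{t/3}S^V_{t/3}$ handles this), and the reduction of the Poisson factor to $t\in(0,1]$ via the boundedness of $\Gamma$ is correct.

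The genuine gap is your Step 3, which you rightly identify as the heart of the matter but leave as a program rather than a proof. The claim that a freezing-of-coefficients parametrix produces error terms that ``gain a positive power $t^{\kappa'}$ at each step while preserving the Poisson-type pointwise estimate'' is precisely the hard assertion, and it is far from automatic: the difference between the variable-coefficient Dirichlet-to-Neumann operator and the frozen half-space model is a \emph{nonlocal} operator of the same (first) order, and there is no a priori reason that its action on the model Poisson semigroup is pointwise dominated by $t^{\kappa'}$ times a Poisson profile; making that work, if it can be made to work, would itself be a paper-length argument. For the record, the proof in \cite{EO6} avoids a parametrix altogether. It combines (i) the $L_1$--$L_\infty$ bound, (ii) a pointwise bound $|K_{\cn_V}(w_1,w_2)|\le c\,|w_1-w_2|^{-d}$ on the Schwartz kernel of the generator, obtained from elliptic regularity (De Giorgi--Nash/Schauder estimates for the Poisson kernel of the domain, which is where the $C^{1+\kappa}$ boundary and the H\"older continuity of the $c_{kl}$ enter), and (iii) commutator estimates $\|[\cn_V,M_g]\|_{p\to p}\le c\,{\rm Lip}_\Gamma(g)$, and then runs a weighted-norm/multiple-commutator iteration (Section~8 of \cite{EO6}, the same machinery the present paper reuses in the proof of Lemma~\ref{ldtnpcont306}) to upgrade the on-diagonal bound to the full off-diagonal Poisson bound. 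Your diagnosis of why soft Davies--Gaffney methods fail for a first-order operator is correct, but the replacement you propose is not carried out, and its feasibility is exactly what remains in doubt.
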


It follows from the Poisson bounds that $S^V$ extends consistently to an
exponentially bounded semigroup on $L_p(\Gamma)$ for all $p \in [1,\infty]$,
which is a $C_0$-semigroup if $p \in [1,\infty)$.
With abuse of notation we denote this semigroup on $L_p(\Gamma)$ 
also by~$S^V$.

The first, and essential part for the proof of the semigroup on $C(\Gamma)$ 
is that the semigroup $S^V$ leaves $C(\Gamma)$ invariant.

\begin{lemma} \label{ldtnpcont202} 
If $t > 0$, then $S^V_t(L_\infty(\Gamma)) \subset C(\Gamma)$. 
In particular, $S^V_t(C(\Gamma)) \subset C(\Gamma)$ for all $t > 0$.
\end{lemma}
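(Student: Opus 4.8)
The plan is to use the holomorphy of $S^V$ on $L_2(\Gamma)$ to place $S^V_t\varphi$ in the domain of every power of $\cn_V$, and then to invoke elliptic regularity to convert that smoothness into continuity on $\Gamma$. Since $\Gamma$ is compact one has $L_\infty(\Gamma)\subset L_2(\Gamma)$, so it suffices to prove $S^V_t(L_2(\Gamma))\subset C(\Gamma)$ for all $t>0$; the inclusion $S^V_t(C(\Gamma))\subset C(\Gamma)$ is then immediate from $C(\Gamma)\subset L_\infty(\Gamma)$. So fix $t>0$ and $\varphi\in L_2(\Gamma)$ and put $\psi=S^V_t\varphi$. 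As $S^V$ is holomorphic on $L_2(\Gamma)$, the operator $S^V_t$ maps $L_2(\Gamma)$ into $\bigcap_{k\in\Ni}D(\cn_V^k)$; hence $\psi\in D(\cn_V^k)$ for every $k$.

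It remains to show that $D(\cn_V^k)\subset C(\Gamma)$ once $k$ is large; the lemma then follows. For this I would use the description of $\cn_V$ through the conormal problem: $\chi\in D(\cn_V)$ is the trace of the solution $u\in W^{1,2}(\Omega)$ of $(A_D+V)u=0$ with $\partial_\nu u=\cn_V\chi$, and the form domain of $\cn_V$ is $H^{1/2}(\Gamma)$. Using the $C^{1+\kappa}$-regularity of $\Gamma$ and the H\"older continuity of the $c_{kl}$, elliptic regularity for this boundary value problem — of the type established in \cite{EO4}, Section~2 — upgrades membership in $D(\cn_V^k)$ to membership in successively smoother spaces on $\Gamma$ as $k$ grows, so that for $k$ large the conormal datum $\cn_V\psi$ is H\"older continuous on $\Gamma$; boundary Schauder estimates for the divergence-form conormal problem (together with interior estimates) then place the corresponding harmonic lift $u$ in $C^{1,\gamma}(\overline\Omega)\subset C(\overline\Omega)$, whence $\psi=u|_\Gamma\in C(\Gamma)$. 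Applying this to $\psi\in D(\cn_V^k)$ finishes the proof, and the final assertion about $C(\Gamma)$ follows since $C(\Gamma)\subset L_\infty(\Gamma)$.

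The substantive point — and the one I expect to be the main obstacle — is the elliptic regularity invoked in the second paragraph: one must check that the iterated gain in regularity of $D(\cn_V^k)$ actually reaches an embedding into $C(\Gamma)$, despite the boundary being merely $C^{1+\kappa}$ and the coefficients merely H\"older continuous; this is precisely the kind of statement worked out in \cite{EO4}, and I would lean on it. (If one prefers to stay closer to Theorem~\ref{tdtnpcont201}, an alternative is to establish that for each fixed $t>0$ the kernel $K^V_t$ is jointly continuous on $\Gamma\times\Gamma$: then for $\varphi\in L_\infty(\Gamma)$ the map $w\mapsto(S^V_t\varphi)(w)=\int_\Gamma K^V_t(w,w')\,\varphi(w')\,dw'$ is continuous by dominated convergence, the Poisson bound furnishing a majorant that is integrable over the compact set $\Gamma$ and independent of $w$. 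This again reduces the lemma to a regularity statement, now for the kernel.)
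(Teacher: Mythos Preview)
Your primary route --- placing $S^V_t\varphi$ in $\bigcap_k D(\cn_V^k)$ and then asserting $D(\cn_V^k)\subset C(\Gamma)$ for large $k$ --- has a real gap at the second step. Under the standing hypotheses ($C^{1+\kappa}$-boundary, merely H\"older principal coefficients) there is no Sobolev ladder on $\Gamma$ that one can climb indefinitely, and you give no mechanism for the claimed bootstrapping. Your sketch needs, at some finite stage, that $\cn_V\psi$ be H\"older (or at least bounded) in order to invoke boundary Schauder/Nittka-type estimates, but starting from $\psi\in L_2(\Gamma)$ you only know $\cn_V^j\psi\in L_2(\Gamma)$ for all $j$; how one passes from iterated $L_2$-domain membership to $L_\infty$ or H\"older regularity of $\cn_V\psi$ is precisely the missing idea. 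The reference to \cite{EO4}, Section~2 does not supply this: that section sets up the operator and its form, it does not prove an embedding $D(\cn_V^k)\hookrightarrow C(\Gamma)$.

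The paper avoids this difficulty entirely. It works with the eigenfunction expansion of the kernel: each eigenfunction $\varphi_n$ has a harmonic lift $u_n$ that satisfies a variational identity with bounded boundary data $\lambda_n\varphi_n\in L_\infty(\Gamma)$ (the $L_\infty$ bound coming from the $L_2\!\to\! L_\infty$ estimate implied by the Poisson bounds), and Nittka's regularity theorem \cite{Nit4}, Theorem~3.14(ii), then gives $u_n\in C(\overline\Omega)$, hence $\varphi_n\in C(\Gamma)$. The same $L_2\!\to\! L_\infty$ bound yields $\|\varphi_n\|_\infty\le c\,e^{\delta t}t^{-(d-1)/2}e^{\lambda_n t/3}$, which makes the series $K^V_t=\sum e^{-\lambda_n t}\varphi_n\otimes\overline{\varphi_n}$ converge uniformly on $\Gamma\times\Gamma$; thus $K^V_t$ is continuous and $S^V_t(L_\infty(\Gamma))\subset C(\Gamma)$ follows. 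This is exactly your parenthetical alternative, made concrete: the regularity input is applied once, to eigenfunctions (where the data are automatically bounded), rather than to an abstract element of $D(\cn_V^k)$.
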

\begin{proof} 
The operator $\cn_V$ is self-adjoint on $L_2(\Gamma)$ and has compact resolvent.
Therefore there exists 
an orthonormal basis $(\varphi_n)_{n \in \Ni}$ of $L_2(\Gamma)$ such that $\varphi_n$ is an 
eigenfunction of $\cn_V$ with corresponding eigenvalue $\lambda_n$ for all $n \in \Ni$.
Let $n \in \Ni$.
By the definition of $\cn$ there exists a  function 
$u_n \in W^{1,2}(\Omega)$, harmonic in $\Omega$ and such that 
$\Tr u_n = \varphi_n$.
Clearly
$\varphi_n \in L_\infty(\Gamma)$ and 
\[
\int_\Omega \sum_{k,l=1}^d c_{kl} (\partial_k u_n) \, \overline{\partial_l v} 
    + \int_\Omega V \, u_n \, \overline v 
= \lambda_n \int_{\Gamma} \varphi_n \, \overline{\Tr v}
\]
for every $v \in W^{1,2}(\Omega)$.
Then from Theorem 3.14 ii) in \cite{Nit4} it follows that $u_n$ is H\"older continuous on $\Omega$.
In particular,  $\varphi_n \in C(\Gamma)$ for every $n \in \Ni$. 

The heat kernel $K^V$ of $\cn$ can be written as
\begin{equation}\label{eq2.1}
K^V_t (w_1, w_2) = \sum_{n=1}^\infty e^{-\lambda_n t} \varphi_n(w_1) \, \overline{\varphi_n(w_2)}.
\end{equation}
For each $t > 0$, the sum converges uniformly in $C(\Gamma) \times C(\Gamma)$.
In order to see this we argue as in \cite{AE7}, Theorem~2.7.
Since 
$e^{-\lambda_n \frac{t}{3}} \varphi_n = S_{\frac{t}{3}} \varphi_n$ one obtain from the 
$L_2$--$L_\infty$ estimate of $S_t$ (or from the Poisson bound) that 
\[
\| \varphi_n \|_{L_\infty(\Gamma)} 
\le c \, e^{\delta t} \, t^{-(d-1)/2} \, e^{\lambda_n \, \frac{t}{3}}
\]
for some constants $c,\delta > 0$ independent of $n$ and $t$.
  Therefore
\[
\sum_{n=1}^\infty |e^{-\lambda_n t} \, \varphi_n(w_1) \, \overline{\varphi_n(w_2)}| 
\le c^2 \, e^{2\delta t} \, t^{-(d-1)} \sum_{n=1}^\infty e^{-\lambda_n \frac{t}{3}}
\]
for all $\omega_1,\omega_2 \in \Gamma$.
The sum on the RHS is convergent since it coincides with the 
trace of the operator $S_{\frac{t}{3}}$.
Hence the sum on the RHS of (\ref{eq2.1})  is uniformly convergent in $\Gamma \times \Gamma$.

It follows now that $K^V_t$ is continuous on $\Gamma \times \Gamma$.
Therefore $S^V_t \varphi \in C(\Gamma)$ for every $\varphi \in L_\infty(\Gamma)$. 
\end{proof}

We may define $ T^V_t $ to be the restriction of $S^V_t$ to $C(\Gamma)$ for all $t > 0$.
The family $T^V := (T^V_t)_{t>0}$ is clearly a semigroup on $C(\Gamma)$.

\begin{prop} \label{pdtnpcont203} 
The semigroup $T^V$ is strongly continuous  on $C(\Gamma)$. 
\end{prop}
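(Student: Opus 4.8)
The plan is to show strong continuity of $T^V$ at $t = 0$, i.e. that $\|T^V_t \varphi - \varphi\|_{L_\infty(\Gamma)} \to 0$ as $t \downarrow 0$ for every $\varphi \in C(\Gamma)$; strong continuity at general $t > 0$ then follows from the semigroup property together with Lemma~\ref{ldtnpcont202} and uniform boundedness of $(S^V_t)$ near any positive time. The standard route is to combine three ingredients: (a) the semigroup $(S^V_t)$ on $L_\infty(\Gamma)$ is uniformly bounded on compact subsets of $[0,\infty)$, which follows from the Poisson bounds of Theorem~\ref{tdtnpcont201} (integrating the kernel estimate over $\Gamma$ with respect to $w_2$ gives $\|S^V_t\|_{L_\infty \to L_\infty} \le c$ for $t \in (0,1]$, since $\int_\Gamma (1 + |w_1 - w_2|/t)^{-d}\, dw_2 \lesssim t^{d-1}$ by the $(d-1)$-dimensionality of $\Gamma$); (b) $(T^V_t)$ converges strongly to the identity on a dense subspace of $C(\Gamma)$; and (c) the $\varepsilon/3$-argument upgrading convergence on a dense set plus uniform boundedness to convergence on all of $C(\Gamma)$.

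For the dense subspace in step (b), the natural choice is the linear span of the eigenfunctions $(\varphi_n)_{n \in \Ni}$, which was shown in the proof of Lemma~\ref{ldtnpcont202} to lie in $C(\Gamma)$; this span is dense in $L_2(\Gamma)$, but I need density in $C(\Gamma)$ with respect to the sup-norm, which is not immediate. An alternative, cleaner dense subspace is $\{S^V_s \psi : s > 0,\ \psi \in C(\Gamma)\}$ (or even $\psi \in L_\infty(\Gamma)$), whose closure in $C(\Gamma)$ contains $\lim_{s \downarrow 0} S^V_s \psi$; but establishing that this limit exists and is dense is essentially the problem itself. The most robust approach is to prove convergence directly: for $\varphi \in C(\Gamma)$ and $w \in \Gamma$, write
\[
(T^V_t \varphi)(w) - \varphi(w) = \int_\Gamma K^V_t(w, w') \big( \varphi(w') - \varphi(w) \big)\, dw' + \varphi(w)\Big( \int_\Gamma K^V_t(w,w')\, dw' - 1 \Big),
\]
split the first integral into the region $|w - w'| \le \rho$ and its complement, use uniform continuity of $\varphi$ to handle the near region, and use the Poisson bound together with $\int_{|w-w'|\ge\rho} (1 + |w-w'|/t)^{-d}\, dw' \to 0$ as $t \downarrow 0$ (for fixed $\rho$) to handle the far region. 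For the second term one needs control of $\int_\Gamma K^V_t(w,w')\, dw' \to 1$; this is where the structure of the Dirichlet-to-Neumann operator enters, via the fact that $S^V_t \one$ should converge to $\one$ — if $V = 0$ the constant function is $\cn_0$-harmonic and $S^V_t \one = \one$ exactly, while for general $V$ one argues that $\one \in D(\cn_V)$-type estimates or uses that $S^V_t \one \to \one$ in $L_2$ plus the uniform bound and equicontinuity coming from the kernel to get uniform convergence.

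The main obstacle I anticipate is precisely this last point: controlling $\int_\Gamma K^V_t(w,w')\,dw'$ — equivalently, showing $S^V_t \one \to \one$ uniformly on $\Gamma$ — when $V \neq 0$, since then $\one$ is not in the kernel of $\cn_V$ and $S^V_t\one$ genuinely moves. One way around it: show that the family $(S^V_t \one)_{t \in (0,1]}$ is equicontinuous on $\Gamma$ (using a Poisson-type gradient/Hölder bound on the kernel, or elliptic regularity for the harmonic extension as in the proof of Lemma~\ref{ldtnpcont202}), combine this with $S^V_t \one \to \one$ in $L_2(\Gamma)$, and invoke Arzelà--Ascoli to conclude uniform convergence. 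Alternatively, and perhaps more in the spirit of the paper, one can deduce strong continuity abstractly: $S^V$ is already a $C_0$-semigroup on $L_2(\Gamma)$, it maps $L_\infty$ into $C(\Gamma)$ for $t > 0$ by Lemma~\ref{ldtnpcont202}, and it is uniformly bounded on $C(\Gamma)$ near $t = 0$; then for $\varphi \in C(\Gamma)$ and any sequence $t_n \downarrow 0$, the functions $T^V_{t_n}\varphi$ are uniformly bounded and (via a kernel-gradient or elliptic-regularity bound) equicontinuous, hence have a uniformly convergent subsequence whose limit must be $\varphi$ because $T^V_{t_n}\varphi \to \varphi$ in $L_2(\Gamma)$; since every subsequence has a further subsequence converging to $\varphi$, the whole family converges. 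I would present the direct kernel estimate as the primary argument and keep the equicontinuity-plus-$L_2$-limit compactness argument as the fallback for the term involving $\int_\Gamma K^V_t(w,\cdot)$.
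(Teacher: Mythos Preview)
You have correctly identified the central difficulty: controlling $\int_\Gamma K^V_t(w,\cdot)\,dw' - 1$ (equivalently, showing $S^V_t\one \to \one$ uniformly) when $V \neq 0$. However, neither of your proposed resolutions closes the gap. The direct estimate leaves this term open, and the equicontinuity fallback requires a uniform-in-$t$ H\"older or gradient bound on the kernel $K^V_t$ near $t=0$ that is not established anywhere in the paper; the continuity of $K^V_t$ on $\Gamma \times \Gamma$ for each fixed $t>0$ obtained in Lemma~\ref{ldtnpcont202} does not yield equicontinuity of $(T^V_t\varphi)_{t\in(0,1]}$, and the elliptic regularity used there gives H\"older continuity of individual eigenfunctions with constants depending on the eigenfunction, not a uniform modulus.

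The paper sidesteps the obstacle by a perturbation argument. One first proves strong continuity for $V=0$: then $\cn_0\one = 0$, so $\int_\Gamma K_t(w,\cdot) = 1$ exactly, and your direct kernel computation goes through cleanly (the paper uses restrictions of polynomials $P|_\Gamma$ as the dense subspace, via Stone--Weierstrass, and the Lipschitz bound $|P(w_2)-P(w_1)| \lesssim |w_1-w_2|^{1/2}$ together with the Poisson bound gives a quantitative $O(t^{1/2})$ rate). For general $V$ one then writes $\cn_V = \cn_0 + Q$ with $Q = \gamma_0^* \, M_V \, \gamma_V$, and the nontrivial input is that $Q$ is \emph{bounded on $C(\Gamma)$}: $\gamma_V \colon C(\Gamma) \to L_\infty(\Omega)$ and $\gamma_0^* \colon L_\infty(\Omega) \to C(\Gamma)$ are bounded by results from the companion paper on Poisson bounds. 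The bounded-perturbation theorem for $C_0$-semigroups then transfers strong continuity from $-\cn_0$ to $-\cn_V$ on $C(\Gamma)$. This decomposition into the $V=0$ case plus a bounded perturbation is the missing idea.
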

\begin{proof} 
First, we observe that by the Poisson bound of Theorem~\ref{tdtnpcont201}
there exists a constant 
$M > 0$ such that 
\[
\| T^V_t \|_{C(\Gamma) \to C(\Gamma)} \le M
\]
for all $t \in (0, 1]$.
Hence in order to prove strong continuity it is enough to prove that
\begin{equation}\label{eq2.3}
\lim_{t \to 0} T^V_t (P_{| \Gamma}) = P_{| \Gamma}
\end{equation}
for every polynomial $P$, where the limit is taken in the $C(\Gamma)$-sense.
We proceed in two steps.

Assume first that $V = 0$ and set $T = T^0$.
Since $\one \in D(\cn)$ with $\cn (\one) = 0$ it follows that 
\[
\int_\Gamma K_t(w_1, w_2) \, dw_2 = 1
\]
for every $t > 0$ and $w_1 \in \Gamma$.
Therefore
\[
\Big( T_t(P_{| \Gamma}) - P_{| \Gamma} \Big) (w_1) 
= \int_\Gamma K_t(w_1, w_2) \, (P(w_2) - P(w_1)) \, dw_2
.  \]
Now we use the Poisson bound for $t  \in (0, 1]$ and obtain that there are 
constants $c, c', c'' > 0$ such that 
\begin{eqnarray*}
| (T_t(P_{| \Gamma}) - P_{| \Gamma})(w_1) | 
&\le& c \, t^{-(d-1)} \int_\Gamma \Big( 1 + \frac{|w_1-w_2|}{t} \Big)^{-d } |w_1-w_2|^{\frac{1}{2}} \, d w_2\\
&\le& c' \, t^{\frac{1}{2}} \, t^{-(d-1)} 
\int_\Gamma \Big( 1 + \frac{|w_1-w_2|}{t} \Big)^{-(d-\frac{1}{2}) } \, d w_2\\
&\le& c'' \, t^{\frac{1}{2}}
\end{eqnarray*}
uniformly for all $w_1 \in \Gamma$ and $t \in (0,1]$.
This shows (\ref{eq2.3}) in the $C(\Gamma)$-sense. 

For a general $V \in L_\infty(\Omega)$ we proceed by perturbation.
It follows from \cite{EO6}, Corollary~5.6 that 
\[
\cn_V = \cn + Q = \cn + \gamma_0^* \, M_V \, \gamma_V
,  \]
where $\gamma_V$ is the harmonic lifting associated with 
$-\sum_{k,l=1}^d \partial_k( c_{kl} \, \partial_l) + V$, 
$\gamma_0$ is the harmonic lifting associated with 
$-\sum_{k,l=1}^d \partial_k( c_{kl} \, \partial_l)$ and  $M_V$ is the multiplication operator by~$V$. 
By \cite{EO6}, Proposition~5.5(d) the operator $\gamma_V$ is bounded from $C(\Gamma)$ to 
$L_\infty(\Omega)$ and 
a combination of Lemma~5.4 and Propositions 4.3 and~5.3 shows that 
$\gamma_0^* $ is bounded from $ L_\infty(\Omega)$ to $C(\Gamma)$.
Therefore the operator $Q$ is bounded on $C(\Gamma)$. 
Since the part of $-\cn$ on $C(\Gamma)$ generates a $C_0$-semigroup,  the same result 
holds for the part of $-\cn_V$ by bounded perturbation.
\end{proof}

\section{Holomorphy and Poisson bounds} \label{Sdtnpcont3}

We have proved in the previous section that the semigroup $T^V$ generated by
the part of $-\cn_V$ on $C(\Gamma)$ is 
strongly continuous on $C(\Gamma)$.
Here we prove that this semigroup is holomorphic with angle~$\frac{\pi}{2}$.  

Let us denote by 
$\Sigma(\theta) = \{ z \in \Ci:  z \not= 0 \mbox{ and } |\arg(z)| < \theta \}$  the 
open sector of the right half-plane with angle $\theta \in (0, \frac{\pi}{2})$. 
We start with proving Poisson bounds.

\begin{lemma} \label{ldtnpcont306} 
Let $z_0 \in \Sigma(\frac{\pi}{2d})$ with $|z_0| = 1$.
Then the heat kernel of the semigroup $(S^V_{t z_0})_{t > 0}$ satisfies 
Poisson bounds.
\end{lemma}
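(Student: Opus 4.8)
The plan is to transfer the real-variable Poisson bound of Theorem~\ref{tdtnpcont201} to the rotated semigroup $(S^V_{tz_0})_{t>0}$ by an analytic-continuation / Cauchy-integral argument, exploiting that $z_0$ lies in the small sector $\Sigma(\frac{\pi}{2d})$ where the available Gaussian-type decay of order $d$ in the variable $\frac{|w_1-w_2|}{t}$ has enough room to absorb a rotation by angle $|\arg z_0| < \frac{\pi}{2d}$. Concretely, I would first record that since $\cn_V$ is self-adjoint and bounded below, the holomorphic semigroup $z\mapsto S^V_z$ is defined on the whole open right half-plane, and for each fixed pair $w_1,w_2\in\Gamma$ the scalar function $z\mapsto K^V_z(w_1,w_2)$ is holomorphic there — this follows from the spectral representation (\ref{eq2.1}), whose convergence (uniform on $\Gamma\times\Gamma$ for $\RRe z$ bounded below by a positive constant, by the same trace-class argument as in Lemma~\ref{ldtnpcont202}) shows the kernel is holomorphic in $z$ with values in $C(\Gamma\times\Gamma)$.

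The main step is a Cauchy-integral estimate. Fix $w_1,w_2\in\Gamma$ and write $\rho=|w_1-w_2|$. For $t>0$ represent
\[
K^V_{tz_0}(w_1,w_2)=\frac{1}{2\pi i}\oint_{|\zeta-t|=\varepsilon t}\frac{K^V_{\zeta z_0}(w_1,w_2)}{\zeta-t}\,d\zeta,
\]
but this is not yet enough; instead one deforms the ray $\{sz_0:s>0\}$ back toward the positive real axis. The classical device (as in the theory of Gaussian bounds for analytic semigroups) is: for $z=tz_0$ with $|\arg z_0|=\alpha<\frac{\pi}{2d}$, choose a point $t'>0$ on the real axis comparable to $t$ and apply the Cauchy formula on a circle of radius $\eta t$ (with $\eta$ small, depending on $\alpha$) centred at $t'$ and lying entirely in the right half-plane; on that circle $\RRe\zeta\ge ct$, so the real-variable bound of Theorem~\ref{tdtnpcont201} applies at each $\zeta$, giving
\[
|K^V_{\zeta}(w_1,w_2)|\le \frac{C\,(t\wedge1)^{-(d-1)}\,e^{-\lambda_1 t/2}}{\bigl(1+\rho/t\bigr)^{d}}.
\]
Averaging over the circle and using $|\zeta-z|\gtrsim t$ reproduces, up to constants, the same bound for $K^V_{z}(w_1,w_2)$, which is exactly the asserted Poisson bound for $(S^V_{tz_0})_{t>0}$. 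The point where the restriction $z_0\in\Sigma(\frac{\pi}{2d})$ is used is in checking that the circle (or the deformed contour) can be kept inside the right half-plane while $\RRe\zeta$ stays comparable to $t$; equivalently, the polynomial weight $\bigl(1+\rho/t\bigr)^{-d}$ at the contour points is comparable to the weight $\bigl(1+\rho/|z|\bigr)^{-d}$ because $|\zeta|$ and $|z|$ are comparable there, and a rotation by less than $\frac{\pi}{2d}$ distorts these quantities by bounded factors.

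The step I expect to be the main obstacle is making the contour-deformation argument yield the \emph{correct} power $d$ (and the correct $(t\wedge1)^{-(d-1)}$ prefactor and exponential $e^{-\lambda_1 t}$) rather than a weaker bound: a naive Cauchy estimate on a single circle typically loses a factor in the spatial weight or in the power of $t$, so one must be careful about the radius of the circle (it should be a small fixed fraction of $t$, uniformly in $\rho$), and about separating the regimes $t\le1$ and $t\ge1$ and the regimes $\rho\le t$ and $\rho\ge t$. I would handle the exponential factor by noting $e^{-\lambda_1\RRe\zeta}\le e^{-\lambda_1 t/2}$ on the contour (shifting by the semigroup law $S^V_z=S^V_{z/2}S^V_{z/2}$ if $\lambda_1\le0$, or simply using boundedness below of $\cn_V$ to absorb it into the constant $c$ on bounded time), and I would treat large $t$ by first splitting $z=z'+z''$ with $\RRe z'=1$ and using the $t\le1$ bound together with the uniform $L_\infty$-boundedness of $S^V_{z'}$ on the vertical line $\RRe z'=1$. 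The remaining routine verification is that all constants are independent of $w_1,w_2$ and of $t$, which is immediate since the input bound of Theorem~\ref{tdtnpcont201} is uniform.
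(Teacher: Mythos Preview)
Your central step contains a genuine error. When you claim that on the circle $|\zeta - t'| = \eta t$ ``the real-variable bound of Theorem~\ref{tdtnpcont201} applies at each $\zeta$'', this is false: that theorem is stated and proved only for real $t>0$, and the points $\zeta$ on your circle are complex. Without an a priori pointwise bound on $K^V_\zeta$ at complex $\zeta$ the Cauchy integral delivers nothing. The correct passage from real-time to complex-time pointwise bounds is a Phragm\'en--Lindel\"of/three-lines argument (this is precisely \cite{DR1}, Proposition~3.3, which the paper invokes), and that argument \emph{unavoidably} degrades the spatial exponent: on the sector $\Sigma(\varepsilon\tfrac{\pi}{2})$ one obtains only decay of order $d(1-\varepsilon)$, not $d$. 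Your own suspicion that ``a naive Cauchy estimate \ldots\ typically loses a factor in the spatial weight'' is therefore exactly right, but the loss is structural and cannot be repaired by tuning the circle radius or by splitting into regimes $\rho\le t$ versus $\rho>t$.

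The paper recovers the full exponent $d$ by a different mechanism. Since $d(1-\varepsilon)>d-1$, the degraded complex-time bound is still integrable over $\Gamma$ and hence furnishes uniform $L_p\to L_q$ estimates for the rotated semigroup $(S^V_{tz_0})_{t>0}$. The rotated generator $\Lambda_0=z_0\,\cn_V$ then inherits two further pieces of structure from $\cn_V$: the commutator $[\Lambda_0,M_g]$ with any Lipschitz multiplier is $L_p$-bounded by $c\,{\rm Lip}_\Gamma(g)$ (\cite{EO6}, Theorem~7.3), and its Schwartz kernel satisfies $|K_{\Lambda_0}(w_1,w_2)|\le c\,|w_1-w_2|^{-d}$ (\cite{EO6}, Proposition~6.5). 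These three ingredients---$L_p$--$L_q$ bounds, commutator bounds, and off-diagonal Schwartz-kernel bounds---are exactly the hypotheses of the machinery in \cite{EO6}, Section~8, which is then rerun verbatim for $\Lambda_0$ in place of $\cn_V$ to produce the full Poisson bound with exponent~$d$. The restriction $z_0\in\Sigma(\tfrac{\pi}{2d})$ enters only to guarantee that one can choose $\varepsilon$ with $|\arg z_0|<\varepsilon\tfrac{\pi}{2}$ and still have $d(1-\varepsilon)>d-1$.
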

\begin{proof}
First, let $\varepsilon \in (0,\frac{1}{d})$ be such that 
$|\arg z_0| < \varepsilon$.
By the Poisson bound of Theorem~\ref{tdtnpcont201} it follows from Proposition~3.3 in \cite{DR1} 
(see also Theorem~1 in \cite{YangZhu}) that there exist 
$c, \delta > 0$  such that 
the kernel $K^V_z$ of $S^V_z$ satisfies
\begin{equation}\label{eq4.1}
| K^V_z(w_1, w_2) | 
\le c \, (1 \wedge \RRe z)^{-(d-1)} \, e^{\delta \RRe z} 
    {\displaystyle \Big( 1 + \frac{|w_1-w_2|}{ |z|} \Big)^{d(1-\varepsilon) }}
\end{equation}         
for all $z \in \Sigma(\varepsilon \frac{\pi}{2})$ and $w_1,w_2 \in \Gamma$.
Note that $d(1- \varepsilon) > d-1$ and hence there exists a $c' > 0$ such that
\[
\max  \left( \sup_{w_2 \in \Gamma} \int_\Gamma | K^V_{t z_0}(w_1,w_2) | \, dw_1 , 
    \sup_{w_1 \in \Gamma} \int_\Gamma | K^V_{t z_0}(w_1,w_2) | \, dw_2 \right)
\le c' \, e^{t \delta \RRe z_0}
\]
for all $t > 0$. 
Therefore
\[
\|S^V_{t z_0}\|_{p \to p} 
\leq c' \, e^{t \delta \RRe z_0} 
\]
for all $p \in [1,\infty]$ and $t > 0$.
Moreover, (\ref{eq4.1}) implies that 
\[
\| T^V_{tz_0} \|_{1 \to \infty} 
\le c \, (\RRe z_0)^{-(d-1)} \, (1 \wedge t)^{-(d-1)} e^{t \delta \RRe z_0}
\]
for all $t > 0$.
Hence by interpolation, we obtain  $L_p$--$L_q$ estimates 
for all $p, q \in [1, \infty]$ with $p \le q$.

Write $\Lambda_0 := z_0 \, \cn_V$.
Then $- \Lambda_0$ is the generator of the semigroup $(S^V_{t z_0})_{t > 0}$.

Secondly, for any Lipschitz function $g$ on $\Gamma$, the commutator of $\Lambda_0$ 
with $M_g$ (the multiplication operator with $g$) satisfies
\[
{} [ \Lambda_0, M_g] = z_0 \, [\cn_V, M_g]
. \]
By  \cite{EO6}, Theorem~7.3 there exists a $c > 0$ such that 
\[
 \| [\cn_V, M_g] \|_{p \to p}  
\le c \, {\rm Lip}_\Gamma(g)
\]
for all $g \in C^{0,1}(\Gamma)$.
Here  
${\rm Lip}_\Gamma(g) 
    := \sup_{w_1,w_2 \in \Gamma, w_1 \not= w_2} \frac{ | g(w_1) - g(w_2) |}{| w_1-w_2|}$.
Therefore
\[
\| [ \Lambda_0, M_g] \|_{p \to p}  
\le c \, {\rm Lip}_\Gamma(g) 
\]
for all $g \in C^{0,1}(\Gamma)$. 

Thirdly, the Schwartz kernel of $\Lambda_0$ is $z_0 K_{\cn_V}$, where $K_{\cn_V}$ is the 
Schwartz kernel of $\cn_V$.
It is proved in \cite{EO6}, Proposition~6.5 that there is a $c > 0$ such that 
\[
| K_{\cn_V} (w_1, w_2) | \le \frac{c}{ | w_1 - w_2|^d}
\]
for all  $w_1, w_2 \in \Gamma$.
The same estimate is obviously satisfied by the Schwartz kernel of $\Lambda_0$. 

Combining these three observations we can repeat the arguments of \cite{EO6}, Section~8 
and obtain a Poisson bound for the heat kernel associated with the operator $\Lambda_0$.
\end{proof}

The alluded Poisson bounds on the complex right half-plane are as follows.

\begin{thm} \label{tdtnpcont307} 
Suppose $\Omega \subset \Ri^d$ is bounded connected with a 
$C^{1+ \kappa}$-boundary 
$\Gamma$ for some $\kappa \in (0,1)$.
Suppose also that each $c_{kl} = c_{lk}$ is real valued and 
H\"older continuous on~$\Omega$.
Let $V \in L_\infty(\Omega,\Ri)$ and suppose that 
$0 \notin\sigma(A_D +V)$.
Let $\theta \in (0, \frac{\pi}{2})$. 
Then  there exists a $c > 0$ such that 
\[
| K^V_z(w_1,w_2) | 
\leq \frac{c \, ( 1 \wedge \RRe z )^{-(d-1)} \, e^{-\lambda_1 \RRe z} }
         {\displaystyle \Big( 1 + \frac{|w_1-w_2|}{ |z|} \Big)^d }
\]
for all $z \in \Sigma(\theta)$ and $w_1, w_2 \in \Gamma$, where $\lambda_1 $ is the 
first eigenvalue of the operator $\cn_V$.
\end{thm}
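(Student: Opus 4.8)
The plan is to deduce the Poisson bound on each sector $\Sigma(\theta)$ from the single-ray bound of Lemma~\ref{ldtnpcont306} by a covering-plus-semigroup-composition argument. First I would fix $\theta \in (0,\frac{\pi}{2})$. Choose a finite set of unit vectors $z_0^{(1)}, \ldots, z_0^{(N)} \in \Sigma(\frac{\pi}{2d})$ with $|z_0^{(j)}| = 1$ such that the sectors $\{ t\, z_0^{(j)} : t > 0 \}$ together with small angular neighbourhoods cover $\overline{\Sigma(\theta)} \setminus \{0\}$. For each $j$, Lemma~\ref{ldtnpcont306} gives a Poisson bound for the kernel $K^V_{t z_0^{(j)}}$, $t > 0$; what we actually need is a Poisson bound that is uniform for $z$ in the closed subsector around the ray through $z_0^{(j)}$, which one obtains exactly as in the proof of Lemma~\ref{ldtnpcont306}, or alternatively by invoking Proposition~3.3 in \cite{DR1} once more to widen each ray-bound into a sector-bound. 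Since the finitely many subsectors cover $\Sigma(\theta)$, one gets, for some $c, \delta > 0$,
\[
| K^V_z(w_1,w_2) |
\leq \frac{c \, ( 1 \wedge \RRe z )^{-(d-1)} \, e^{\delta \RRe z} }
         {\displaystyle \Big( 1 + \frac{|w_1-w_2|}{ |z|} \Big)^d }
\]
for all $z \in \Sigma(\theta)$.

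The remaining task is to replace the crude exponential $e^{\delta \RRe z}$ by the sharp decay $e^{-\lambda_1 \RRe z}$, where $\lambda_1$ is the bottom of the spectrum of $\cn_V$. Here I would use the semigroup law together with spectral information on $L_2(\Gamma)$. Write $z = z_1 + z_2$ with $z_1, z_2 \in \Sigma(\theta)$, $\RRe z_1 = \RRe z_2 = \frac{1}{2}\RRe z$ (possible since $\Sigma(\theta)$ is convex and the real part is shared out), and use $K^V_z(w_1,w_2) = \int_\Gamma K^V_{z_1}(w_1,w) \, K^V_{z_2}(w,w_2) \, dw$. For the long-time regime $\RRe z \geq 1$ one splits $z = z' + t_0 \cdot \mathbf{1}$ with $\RRe z' = 1$, say, and uses the real-time estimate $\| S^V_{t_0} \|_{L_2 \to L_2} \leq e^{-\lambda_1 t_0}$ (self-adjointness and the spectral theorem) sandwiched between two $L_p$–$L_2$ and $L_2$–$L_q$ bounds coming from the sectorial Poisson bound already established; this trades the bad constant on a bounded piece of the sector for the sharp exponential on the unbounded piece, at the cost of replacing $\RRe z$ by $\RRe z - O(1)$, which is harmless after adjusting $c$. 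For $\RRe z \leq 1$ the factor $e^{-\lambda_1 \RRe z}$ is just a bounded multiplicative constant, so the sectorial bound from the first paragraph already suffices there (with $(1 \wedge \RRe z)^{-(d-1)}$ accounting for the singularity). Also, the on-diagonal blow-up $(1 \wedge \RRe z)^{-(d-1)}$ must be recovered rather than a power of $|z|$; this follows by using the $L_1$–$L_\infty$ bound near the diagonal exactly as in Lemma~\ref{ldtnpcont306}, where one keeps $\RRe z$ (not $|z|$) in the small-time prefactor, which is legitimate since $\RRe z \asymp |z|$ uniformly on $\Sigma(\theta)$ with constants depending on $\theta$.

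The main obstacle I anticipate is making the sector-uniformity genuinely uniform: Lemma~\ref{ldtnpcont306} is stated ray-by-ray, and its proof ("repeat the arguments of \cite{EO6}, Section~8") depends on the commutator bound $\| [\Lambda_0, M_g] \|_{p \to p} \leq c \, {\rm Lip}_\Gamma(g)$ and the Schwartz-kernel bound $|K_{\Lambda_0}(w_1,w_2)| \leq c |w_1 - w_2|^{-d}$, both of which scale linearly in $|z_0| = 1$ but with a constant $c$ that must be checked to be locally uniform as $z_0$ ranges over a compact subset of $\Sigma(\frac{\pi}{2d})$. Since these constants come from fixed operators ($\cn_V$ and its kernel) multiplied by the scalar $z_0$, this uniformity is automatic, so the compactness of the relevant arc of directions closes the argument; the only care needed is that the Davies–Gaffney / Poisson-extrapolation machinery of \cite{DR1} be applied with constants depending continuously on the data, which it does. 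Modulo these bookkeeping points, the theorem follows by patching the finitely many sector bounds and upgrading the exponential as above.
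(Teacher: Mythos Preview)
There is a genuine gap in your covering argument. Lemma~\ref{ldtnpcont306} only yields Poisson bounds for rays through unit vectors $z_0$ with $|\arg z_0| < \frac{\pi}{2d}$. Hence any finite collection $z_0^{(1)},\ldots,z_0^{(N)} \in \Sigma(\frac{\pi}{2d})$ produces rays that are all contained in $\Sigma(\frac{\pi}{2d})$, and no ``small angular neighbourhoods'' of these rays can cover $\overline{\Sigma(\theta)}$ once $\theta > \frac{\pi}{2d}$ (which is the case for every $d \geq 2$ as soon as $\theta > \frac{\pi}{4}$). Widening each ray via Proposition~3.3 of \cite{DR1} does not save you directly either: that proposition turns a Poisson bound with exponent $d$ on a ray into a bound with exponent $d(1-\varepsilon)$ on a sub-sector of half-angle $\varepsilon \cdot (\text{holomorphy angle})$, and to keep the bound integrable on $\Gamma$ you need $d(1-\varepsilon) > d-1$, i.e.\ $\varepsilon < \frac{1}{d}$. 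So a single pass only widens the admissible angle by a factor strictly less than $\frac{1}{d}$ of the remaining distance to $\frac{\pi}{2}$.

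What is missing is precisely the iteration the paper carries out: one applies the full machinery of Lemma~\ref{ldtnpcont306} (Duong--Robinson extrapolation to get a weak sectorial bound, then the commutator and Schwartz-kernel arguments of \cite{EO6}, Section~8 to restore the full exponent~$d$) not to $\cn_V$ but to the rotated generator $z_1 \cn_V$ with $z_1 \in \Sigma(\theta_n)$, whose semigroup is still $L_2$-holomorphic with angle $\frac{\pi}{2} - \theta_n$. This pushes the Poisson bound from $\Sigma(\theta_n)$ to $\Sigma(\theta_{n+1})$ with $\theta_{n+1} = \theta_n + \frac{1}{d}(\frac{\pi}{2} - \theta_n)$, and since $\theta_n \uparrow \frac{\pi}{2}$ finitely many iterations reach any prescribed $\theta < \frac{\pi}{2}$. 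Your uniformity discussion in the last paragraph is correct and relevant, but it addresses a bookkeeping issue rather than this structural one. Your large-time argument is also more complicated than necessary: since $\cn_V$ is self-adjoint, $\|S^V_{is}\|_{2 \to 2} = 1$, so writing $S^V_z = S^V_{t/2} \, S^V_{is} \, S^V_{t/2}$ with $t = \RRe z$ and using $\|S^V_{t/2}\|_{2 \to \infty} \leq c \, e^{-\lambda_1 t/2}$ gives the sharp decay directly.
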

\begin{proof}
Define the sequence $(\theta_n)_{n \in \Ni}$ in $(0,\frac{\pi}{2})$ 
by $\theta_1 = \frac{\pi}{2d}$
and $\theta_{n+1} = \theta_n + \frac{1}{d} \, (\frac{\pi}{2} - \theta_n)$
for all $n \in \Ni$.
We shall prove that for all $n \in \Ni$ and $z_0 \in \Sigma(\theta_n)$ with $|z_0| = 1$
the semigroup $(S^V_{t z_0})_{t > 0}$ satisfies Poisson bounds.
The proof is by induction.

The case $n = 1$ is proved in Lemma~\ref{ldtnpcont306}.
Let $n \in \Ni$ and suppose that the semigroup $(S^V_{t z_0})_{t > 0}$ satisfies Poisson bounds
for all $z_0 \in \Sigma(\theta_n)$ with $|z_0| = 1$.
Let $z_0 \in \Sigma(\theta_{n+1})$ with $|z_0| = 1$.
There exist $z_1 \in \Sigma(\theta_n)$ and $z_2 \in \Sigma(\frac{1}{d} \, (\frac{\pi}{2} - \theta_n))$
such that $|z_1| = |z_2| = 1$ and $z_0 = z_1 \, z_2$.
Then the $C_0$-semigroup $(S^V_{t z_1})_{t > 0}$ satisfies Poisson bounds by the induction 
hypothesis and it extends to a holomorphic semigroup on $L_2(\Gamma)$ 
with angle $\frac{\pi}{2} - \theta_n$.
Arguing as in the proof of Lemma~\ref{ldtnpcont306}, starting with the 
Poisson bounds for $(S^V_{t z_1})_{t > 0}$, it follows that 
the semigroup $(S^V_{t z_1 z_2})_{t > 0}$ satisfies Poisson bounds.
Then by induction the claim follows.

Clearly $(\theta_n)_{n \in \Ni}$ is an increasing sequence and its limit is $\frac{\pi}{2}$.
In particular there exists an $n \in \Ni$ such that $\theta < \theta_n$.
So we need to use the above induction step only a finite number of times to 
cover $\Sigma(\theta)$.
Therefore there exist $c,\delta > 0$ such that 
\[
| K^V_z(w_1,w_2) | 
\leq \frac{c \, (1 \wedge \RRe z)^{-(d-1)} \, e^{\delta \RRe z}}
         {\displaystyle \Big( 1 + \frac{|w_1-w_2|}{ |z|} \Big)^d }
\]
for all $z \in \Sigma(\theta)$ and $w_1,w_2 \in \Gamma$. 
If $t > \frac{1}{2}$, then
\[
\| S^V_t \|_{2 \to \infty} 
= \| S^V_{\frac{1}{2}} \, S^V_{t-\frac{1}{2}}  \|_{2 \to \infty} 
\le \| S^V_{\frac{1}{2}} \|_{2 \to \infty} \| S^V_{t -\frac{1}{2}} \|_{2 \to 2}.
\]
Hence there exists a $c > 0$ such that 
\[
\| S^V_t \|_{2 \to \infty} 
\le c e^{-\lambda_1 t}
\]
for all $t > 0$.
Consequently for all $z = t+ is \in \Sigma(\frac{\pi}{2})$ with  
$ t = \RRe z > 1$ we obtain from 
the identity $S^V_z = S^V_{\frac{t}{2}} \, S^V_{is} \, S^V_{\frac{t}{2}} $ that 
\[
\|  S^V_z \|_{1 \to \infty} 
\le \|  S^V_{\frac{t}{2}} \|_{2 \to \infty}  
       \|  S^V_{\frac{t}{2}} \|_{1 \to 2}  
       \|  S^V_{is} \|_{2 \to 2}  
\le c^2 \, e^{-\lambda_1 \RRe z}.
\]
This implies 
\[
 | K^V_z(w_1,w_2) | \le c^2 e^{-\lambda_1 \RRe z}
\]
for all $w_1,w_2 \in \Gamma$.
 The desired estimate for $\RRe z > 1$ follows from this latter estimate since
 \[
\Big( 1 + \frac{|w_1-w_2|}{ |z|} \Big)^d 
\]
 is bounded uniformly in $z$ and $(w_1, w_2)$, since the  set $\Gamma$ is bounded.
\end{proof}

\begin{prop} \label{pdtnpcont308} 
The semigroup $T^V$ is holomorphic on $C(\Gamma)$ with angle $\frac{\pi}{2}$.
For all $p \in [1,\infty)$ semigroup $S^V$ is holomorphic on $L_p(\Gamma)$ with angle $\frac{\pi}{2}$.
\end{prop}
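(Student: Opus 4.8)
The plan is to derive holomorphy of $T^V$ on $C(\Gamma)$ and of $S^V$ on $L_p(\Gamma)$ directly from the Poisson bounds on the sector $\Sigma(\theta)$ established in Theorem~\ref{tdtnpcont307}, together with the fact, already recorded in Section~\ref{Sdtnpcont2}, that the restriction $(T^V_t)_{t>0}$ is a $C_0$-semigroup on $C(\Gamma)$ and that $S^V$ is a $C_0$-semigroup on $L_p(\Gamma)$ for $p\in[1,\infty)$.

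First I would fix $\theta\in(0,\frac{\pi}{2})$ and, for each $z_0\in\Sigma(\theta)$ with $|z_0|=1$, consider the $C_0$-semigroup $(S^V_{tz_0})_{t>0}$ on $L_2(\Gamma)$ generated by $-z_0\cn_V$. By Theorem~\ref{tdtnpcont307} its kernel $K^V_{tz_0}$ satisfies a Poisson bound; integrating this bound over $\Gamma$ (using that $\Gamma$ is bounded and $(1+|w_1-w_2|/|z|)^{-d}$ is integrable for $d\ge 1$, with the factor $(1\wedge t)^{-(d-1)}$ absorbed in the usual way) yields a uniform estimate $\|S^V_{tz_0}\|_{C(\Gamma)\to C(\Gamma)}\le M$ and $\|S^V_{tz_0}\|_{L_p(\Gamma)\to L_p(\Gamma)}\le M$ for $t\in(0,1]$, with $M$ independent of the particular $z_0$ on the unit circle in $\Sigma(\theta)$ (the constant $c$ in Theorem~\ref{tdtnpcont307} depends only on $\theta$). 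This says precisely that $z\mapsto S^V_z$ is \emph{bounded} on $\Sigma(\theta)\cap\{|z|\le 1\}$, hence (combining with the exponential bound for $\RRe z>1$ which also follows from Theorem~\ref{tdtnpcont307}) bounded on all of $\Sigma(\theta')$ after multiplying by a suitable exponential, for any $\theta'<\theta$.

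Next I would invoke the standard characterisation of bounded holomorphic semigroups: a $C_0$-semigroup $T$ on a Banach space $X$ extends to a holomorphic semigroup on $\Sigma(\theta)$ bounded on each subsector $\Sigma(\theta')$, $\theta'<\theta$, if and only if $T$ extends to a family $(T_z)_{z\in\Sigma(\theta)}$ that is holomorphic in $z$, satisfies the semigroup law, and is uniformly bounded on the unit disc intersected with $\Sigma(\theta')$; equivalently (Kato), it suffices that the orbits $t\mapsto T_t x$ extend holomorphically. On $L_2(\Gamma)$ the semigroup is already holomorphic with angle $\frac{\pi}{2}$ since $-\cn_V$ is self-adjoint and bounded below, so the family $(S^V_z)_{z\in\Sigma(\pi/2)}$ is defined and consistent on $L_2\cap L_p$ and on $L_2\cap C(\Gamma)$; the uniform boundedness just obtained on each $\Sigma(\theta')$ upgrades this to a bounded holomorphic extension on $L_p(\Gamma)$ and on $C(\Gamma)$ (for $C(\Gamma)$ one also uses Lemma~\ref{ldtnpcont202}, that $S^V_z$ maps $L_\infty(\Gamma)$ into $C(\Gamma)$ for every $z$ with $\RRe z>0$, to see the extension stays inside $C(\Gamma)$; holomorphy of $z\mapsto S^V_z\varphi$ as a $C(\Gamma)$-valued map follows from holomorphy in $L_2$ plus the local uniform bound in the sup-norm and a density/Vitali argument). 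Since $\theta<\frac{\pi}{2}$ was arbitrary, the holomorphy angle is exactly $\frac{\pi}{2}$, and the sup-norm unboundedness as $\RRe z\downarrow 0$ is consistent with this being optimal.

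The main obstacle I anticipate is the passage from the kernel Poisson bound on $\Sigma(\theta)$ to genuine holomorphy as a $C(\Gamma)$-valued function, rather than mere boundedness of the operators: one must check that $z\mapsto S^V_z\varphi$ is holomorphic with values in $C(\Gamma)$ (not just in $L_2$) for $\varphi\in C(\Gamma)$. This is where the combination of (i) holomorphy in the $L_2$-topology, (ii) the uniform $C(\Gamma)$-bound on subsectors coming from Theorem~\ref{tdtnpcont307}, and (iii) Lemma~\ref{ldtnpcont202} must be assembled carefully, e.g.\ via a Vitali/Montel-type argument: a locally bounded family of $C(\Gamma)$-valued maps that is holomorphic after composing with the (continuous, injective) inclusion $C(\Gamma)\hookrightarrow L_2(\Gamma)$ is itself holomorphic. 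Everything else — the integration of the Poisson kernel, the interpolation to all $p$, and the identification of the angle — is routine given the results already proved.
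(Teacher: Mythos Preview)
Your proposal is correct and follows essentially the same route as the paper: derive uniform operator bounds on each sector $\Sigma(\theta)$ from the Poisson bounds of Theorem~\ref{tdtnpcont307}, use the factorisation $S^V_z = S^V_t S^V_{z-t}$ together with Lemma~\ref{ldtnpcont202} to land in $C(\Gamma)$, and then upgrade the known $L_2$-holomorphy to $C(\Gamma)$- (equivalently $L_\infty$-)holomorphy via a weak-to-strong holomorphy principle under local boundedness. The only cosmetic difference is that the paper invokes \cite{ABHN}, Proposition~A.3 (with the norming set $N=\{f\in L_1\cap L_2:\|f\|_2\le 1\}$) for that last step, whereas you phrase it as a Vitali/Montel argument; these are equivalent formulations of the same fact.
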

\begin{proof}
Let $\theta \in (0,\frac{\pi}{2})$.
By Theorem~\ref{tdtnpcont307} there are $c,\delta > 0$ such that 
$\|S^V_z\|_{p \to p} \leq c \, e^{\delta \RRe z}$ for all $p \in [1,\infty]$ and 
$z \in \Sigma(\theta)$.
Given $z \in  \Sigma(\theta)$ we find a $t > 0$ small enough 
such that $z- t \in \Sigma(\theta)$.
By the semigroup property we can write $S^V_z = S^V_t \, S^V_{z-t}$.
Then Lemma~\ref{ldtnpcont202} gives $S^V_z (L_\infty(\Gamma)) \subset C(\Gamma)$.
The restriction $T^V_z$ of $S^V_z$ is a bounded operator on $C(\Gamma)$. 

In order to see that $z \mapsto T^V_z $ is holomorphic from $\Sigma(\theta)$ 
with values in $\cl(C(\Gamma))$ it is enough to prove that
$z \mapsto S^V_z $ is holomorphic from $\Sigma(\theta)$ 
with values in $\cl(L_\infty(\Gamma))$.
This follows from the holomorphy of the semigroup $S^V$ on $L_2(\Gamma)$ 
(recall that $\cn_V$ is a self-adjoint operator) and \cite{ABHN}, Proposition~A.3 
by choosing there 
\[
N = \{ f \in  L_1(\Gamma) \cap L_2(\Gamma): \| f \|_2 \le 1 \}
.  \]

Finally, the fact that $z \mapsto T^V_z$ is strongly continuous on $C(\Gamma)$ 
follows as at the end of the proof of Theorem~2.4 in \cite{Ouh4}.
The holomorphy of $S^V$ on $L_p(\Gamma)$ follows then by duality and 
interpolation, or, alternatively, by using \cite{Kat1} Theorem~IX.1.23.
\end{proof}

\begin{proof}[{\bf Proof of Theorem~\ref{tdtnpcont101}.}]
This is a combination of Propositions~\ref{pdtnpcont203} and~\ref{pdtnpcont308}.
\end{proof}

As a corollary of Theorem~\ref{tdtnpcont307} 
one obtains a holomorphic $H^\infty$-functional calculus on $L_p$
with optimal angle.

\begin{cor} \label{cdtnpcont309}
Suppose $\Omega \subset \Ri^d$ is bounded connected with a 
$C^{1+ \kappa}$-boundary $\Gamma$ for some $\kappa \in (0,1)$.
Suppose also that each $c_{kl} = c_{lk}$ is real valued and 
H\"older continuous on~$\Omega$.
Let $V \in L_\infty(\Omega,\Ri)$ and suppose that 
$0 \notin \sigma(A_D +V)$.
Let $\lambda > \min \sigma(A_D +V)$ and $\theta \in (0,\frac{\pi}{2})$.
Then the operator $\cn_V + \lambda \, I$ has a bounded 
$H^\infty(\Sigma(\theta))$-functional calculus on $L_p(\Gamma)$ for all 
$p \in (1,\infty)$.
\end{cor}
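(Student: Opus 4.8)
The plan is to derive the bounded $H^\infty$-functional calculus from the Poisson bounds of Theorem~\ref{tdtnpcont307} via the by-now standard machinery relating heat kernel Gaussian/Poisson-type bounds on the complex half-plane to holomorphic functional calculus on $L_p$. First I would fix $p \in (1,\infty)$ and $\lambda > \min\sigma(A_D+V)$, and set $B := \cn_V + \lambda I$. Since $\lambda > \min\sigma(A_D+V) \ge \lambda_1$ (the bottom of the spectrum of $\cn_V$; here one uses that $\lambda_1 = \min\sigma(\cn_V) \le \min\sigma(A_D+V)$, or more directly that $\lambda > -\lambda_1$ after translating signs appropriately), the operator $B$ is invertible and $0$ lies to the left of $\sigma(B)$, so that $-B$ generates a holomorphic semigroup $(e^{-tB})_{t>0} = (e^{-\lambda t} S^V_t)_{t>0}$ on $L_p(\Gamma)$ whose kernel is $e^{-\lambda t} K^V_t$. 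By Theorem~\ref{tdtnpcont307}, for each $\theta \in (0,\frac{\pi}{2})$ the analytically continued kernel $e^{-\lambda z} K^V_z$ satisfies, for $z \in \Sigma(\theta)$,
\[
| e^{-\lambda z} K^V_z(w_1,w_2)| \le \frac{c\,(1 \wedge \RRe z)^{-(d-1)}\, e^{-(\lambda+\lambda_1)\RRe z}}{\big(1 + \tfrac{|w_1-w_2|}{|z|}\big)^d},
\]
and since $\lambda + \lambda_1 > 0$ the exponential factor decays; in particular $\|e^{-zB}\|_{1\to\infty}$ and the $L_q$-operator norms of $e^{-zB}$ are uniformly bounded on $\Sigma(\theta)$, with $\RRe z$-integrable off-diagonal decay inherited from the Poisson profile.

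The core step is then to invoke a general theorem that converts such complex-time kernel bounds into a bounded $H^\infty(\Sigma(\theta'))$-calculus for every $\theta' \in (0,\frac{\pi}{2})$. The cleanest route is to appeal to the results on holomorphic functional calculus under Poisson (or more generally polynomial off-diagonal) bounds: one writes, for $f \in H^\infty(\Sigma(\sigma))$ with $\sigma$ small, $f(B)$ via the Cauchy/Mellin representation
\[
f(B) = \frac{1}{2\pi i} \int_{\partial \Sigma(\sigma)} f(\zeta)\,(\zeta - B)^{-1}\, d\zeta,
\]
or equivalently through the formula $f(B) = \int_0^\infty \psi(tB)\,\tfrac{dt}{t}$ for suitable $\psi$, and estimates the $L_p$-norm using the kernel bounds of the resolvent/semigroup. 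The key technical input — that $B$ has a bounded $H^\infty$-calculus of angle $0^+$ on $L_p$ once the semigroup is analytic of angle $\frac{\pi}{2}$ with the stated Poisson-type kernel bounds on every subsector — is exactly the content of the generalized Gaussian/Poisson estimate theory (e.g. Duong–Robinson, Blunck–Kunstmann, or the exposition in Ouhabaz's monograph). Concretely: the operator $B$ is self-adjoint on $L_2$, hence has a bounded $H^\infty$-calculus of angle $0$ there; the Poisson bounds on $\Sigma(\theta)$ for all $\theta < \frac{\pi}{2}$ give the required weighted estimates to run a Calderón–Zygmund / singular integral argument on $\Gamma$ (viewed as a space of homogeneous type) showing $f(B)$ is weak type $(1,1)$ for $f \in H^\infty(\Sigma(\theta))$, and interpolation with the $L_2$ bound plus duality yields boundedness on $L_p$ for all $p \in (1,\infty)$.

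I would organize the write-up as: (i) reduce to showing the weak-type $(1,1)$ bound for $f(B)$, $f \in H^\infty(\Sigma(\theta))$, using that $\Gamma$ with surface measure is a space of homogeneous type (here $C^{1+\kappa}$-regularity of $\Gamma$ is more than enough); (ii) record the semigroup kernel bounds on $\Sigma(\theta)$ from Theorem~\ref{tdtnpcont307} and deduce resolvent kernel bounds $|(\text{kernel of }(\mu+B)^{-1})(w_1,w_2)|$ with appropriate decay by integrating $e^{-zB}$ against $e^{-\mu z}$ along rays; (iii) apply the abstract transference theorem (cite the appropriate result) to conclude. The main obstacle is purely bookkeeping: verifying that the particular form of the Poisson bound — with the $(1 \wedge \RRe z)^{-(d-1)}$ singularity near $\RRe z = 0$ and the $|z|$ in the denominator rather than $\RRe z$ — meets the hypotheses of whichever off-diagonal-bounds functional-calculus theorem one cites; the exponential decay $e^{-(\lambda+\lambda_1)\RRe z}$ coming from $\lambda > -\lambda_1$ is what tames the large-time behaviour and makes the ray integrals converge, and the boundedness of $\Gamma$ makes the large-$|w_1-w_2|$ regime trivial, so no genuinely new estimate is needed beyond Theorem~\ref{tdtnpcont307}.
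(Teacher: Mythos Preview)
Your approach is essentially the same as the paper's: the proof there is a one-line application of Theorem~\ref{tdtnpcont307} together with \cite{DR1} Theorem~3.1 (Duong--Robinson), which is precisely the Poisson-bounds-to-$H^\infty$-calculus machinery you describe. Your write-up spells out the mechanism in considerably more detail than the paper does, but the route is identical.
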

\begin{proof}
This follows from Theorem~\ref{tdtnpcont307} and \cite{DR1} Theorem~3.1.
\end{proof}

Consequently, the operator $\cn_V + \lambda \, I$ has bounded imaginary powers.

\begin{cor} \label{cdtnpcont310}
Adopt the notation and assumptions of Corollary~\ref{cdtnpcont309}.
Let $\nu > 0$ and $p \in (1,\infty)$.
Then there exists a $c > 0$ such that 
\[
\|(\cn_V + \lambda \, I)^{is}\|_{L_p(\Gamma) \to L_p(\Gamma)}
\leq c \, e^{\nu |s|}
\]
for all $s \in \Ri$.
\end{cor}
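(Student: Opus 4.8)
The plan is to deduce bounded imaginary powers directly from the bounded $H^\infty$-functional calculus established in Corollary~\ref{cdtnpcont309}. Fix $p \in (1,\infty)$, $\nu > 0$, and let $\theta \in (0,\frac{\pi}{2})$ be chosen with $\theta < \nu$ (this is the key choice: the $H^\infty(\Sigma(\theta))$ bound on $(\cn_V + \lambda I)$ will produce imaginary powers with exponential growth $e^{\theta|s|}$, so taking $\theta$ small makes the constant $\nu$ arbitrarily small). By Corollary~\ref{cdtnpcont309}, $\cn_V + \lambda I$ is a sectorial operator of angle $\leq \theta$ on $L_p(\Gamma)$ admitting a bounded $H^\infty(\Sigma(\theta))$-functional calculus, say with bound $C_\theta$.

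Next I would record that for each fixed $s \in \Ri$ the function $z \mapsto z^{is}$ belongs to $H^\infty(\Sigma(\theta'))$ for every $\theta' \in (\theta, \frac{\pi}{2})$, with
\[
\sup_{z \in \Sigma(\theta')} |z^{is}| = \sup_{z \in \Sigma(\theta')} e^{-s \arg z} \le e^{\theta' |s|}.
\]
Strictly speaking $z^{is}$ is bounded on $\Sigma(\theta')$ but not decaying at $0$ and $\infty$, so to apply the functional calculus one works with the approximants $z \mapsto z^{is}(1+z)^{-\varepsilon}$ or uses the standard extension of the $H^\infty$-calculus to the full algebra of bounded holomorphic functions valid once sectoriality and a bounded $H^\infty$-calculus are known (see e.g. the convergence lemma in McIntosh's theory). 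This gives
\[
\|(\cn_V + \lambda I)^{is}\|_{L_p(\Gamma) \to L_p(\Gamma)} \le C_{\theta'} \, e^{\theta' |s|}
\]
for all $s \in \Ri$, where $C_{\theta'}$ is the $H^\infty(\Sigma(\theta'))$-calculus bound. Choosing $\theta' \in (\theta,\nu)$ (possible since $\theta < \nu$ and we may as well have taken $\theta < \nu$ at the outset) yields the claim with $c = C_{\theta'}$.

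Alternatively, and perhaps more cleanly, one can cite a black-box result: a sectorial operator with a bounded $H^\infty(\Sigma(\theta))$-calculus for some $\theta < \frac{\pi}{2}$ has bounded imaginary powers with $\|A^{is}\| \le c_\mu e^{\mu|s|}$ for every $\mu > \theta$ (this is, for instance, in Haase's book on functional calculus, or can be extracted from \cite{DR1}). Applying this to $A = \cn_V + \lambda I$ with $\mu = \nu$ and $\theta$ chosen below $\nu$ in Corollary~\ref{cdtnpcont309} gives the result immediately.

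The only genuine subtlety — hardly an obstacle — is the passage from the functional calculus for $H^\infty$ functions that decay at $0$ and $\infty$ (where boundedness is literally asserted in Corollary~\ref{cdtnpcont309}) to the function $z^{is}$, which does not decay; this is handled by the standard approximation/convergence argument and requires only that $\cn_V + \lambda I$ be injective with dense range, which follows from $0 \notin \sigma(A_D + V)$ together with $\lambda > \min\sigma(A_D+V)$ making $\cn_V + \lambda I$ invertible (indeed then the calculus is even unital). So the proof is essentially a one-line invocation of Corollary~\ref{cdtnpcont309} plus the general principle that bounded $H^\infty$-calculus of angle $< \nu$ implies BIP with growth bound $\nu$.
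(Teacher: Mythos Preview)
Your proposal is correct and matches the paper's approach: the paper gives no proof at all for this corollary, simply prefacing it with ``Consequently, the operator $\cn_V + \lambda \, I$ has bounded imaginary powers,'' i.e.\ treating it as an immediate consequence of the bounded $H^\infty(\Sigma(\theta))$-functional calculus in Corollary~\ref{cdtnpcont309}. Your argument (choose $\theta<\nu$, use $\|z^{is}\|_{H^\infty(\Sigma(\theta'))}\le e^{\theta'|s|}$) is exactly the standard unpacking of that implication.
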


An interesting application of the imaginary powers of the 
previous corollary is   maximal regularity for the parabolic 
problem
\begin{equation}
\left\{ \begin{array}{l}
\displaystyle \frac{\textstyle \partial \varphi(t,\cdot)}{\textstyle \partial t} 
     + \cn_V \varphi(t,\cdot) = f(t)  
\qquad (t \in (0,\tau])   \\[10pt]
\varphi(0) = \varphi_0
        \end{array} \right.
\label{ecdtnpcont311;1}
\end{equation}
for any $\tau > 0$ and $\varphi_0 \in (L_p(\Gamma), D(\cn_V))_{1-\frac{1}{p},p}$,
where $(L_p(\Gamma), D(\cn_V))_{1-\frac{1}{p},p}$ is the real interpolation space.

\begin{cor} \label{cdtnpcont311}
Adopt the notation and assumptions of Corollary~\ref{cdtnpcont309}.
Then the operator $\cn_V$ has maximal $L_r$-regularity on $L_p(\Gamma)$
for all $p,r \in (1,\infty)$.
More precisely, for every $p,r \in (1,\infty)$ and $\tau > 0$ there exists a $c > 0$ 
such that for all $f \in L_r(0,\tau,L_p(\Gamma))$ and 
$\varphi_0 \in (L_p(\Gamma), D(\cn_V))_{1-\frac{1}{p},p}$ there exists a unique 
solution $\varphi$ to the problem {\rm (\ref{ecdtnpcont311;1})}
with $\varphi \in W^{1,r}(0,\tau,L_p(\Gamma)) \cap L_r(0,\tau,D(\cn_V))$
and 
\[
\|\varphi\|_{W^{1,r}(0,\tau,L_p(\Gamma))} + \|\varphi\|_{L_r(0,\tau,D(\cn_V))}
\leq c \Big( \|\varphi_0\|_{(L_p(\Gamma), D(\cn_V))_{1-\frac{1}{p},p}} 
             + \|f\|_{L_r(0,\tau,L_p(\Gamma))}
       \Big)
.  \]
\end{cor}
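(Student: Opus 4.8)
The plan is to deduce maximal $L_r$-regularity from the bounded imaginary powers established in Corollary~\ref{cdtnpcont310} via the classical Dore--Venni theorem. First I would observe that although Corollary~\ref{cdtnpcont310} is stated for $\cn_V + \lambda I$, the operator $\cn_V$ itself need not be invertible; however, maximal regularity on a finite time interval $(0,\tau)$ is insensitive to bounded additive perturbations of the generator, so it suffices to prove maximal $L_r$-regularity for $\cn_V + \lambda I$ (with $\lambda$ as in Corollary~\ref{cdtnpcont309}, which forces $\lambda > \min\sigma(A_D+V) \ge \lambda_1$, hence $\cn_V + \lambda I$ is sectorial and invertible). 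Indeed, if $\varphi$ solves $\varphi' + \cn_V \varphi = f$ with $\varphi(0)=\varphi_0$, then $\psi(t) := e^{-\lambda t}\varphi(t)$ solves $\psi' + (\cn_V + \lambda I)\psi = e^{-\lambda t} f(t)$ with $\psi(0)=\varphi_0$, and on $(0,\tau)$ the maps $\varphi \mapsto \psi$, $f \mapsto e^{-\lambda\cdot} f$ are isomorphisms of the relevant Bochner spaces; conversely one recovers $\varphi$ from $\psi$.

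Next I would invoke the Dore--Venni theorem: since $L_p(\Gamma)$ is a UMD space (it is $1 < p < \infty$), and since by Corollary~\ref{cdtnpcont310} the operator $B := \cn_V + \lambda I$ has bounded imaginary powers satisfying $\|B^{is}\|_{L_p \to L_p} \le c\, e^{\nu|s|}$ for every $\nu > 0$ — in particular with power angle $0 < \pi/2$ — the operator $B$ has maximal $L_r$-regularity on $(0,\infty)$, and a fortiori on $(0,\tau)$, for every $r \in (1,\infty)$. Concretely this gives: for every $g \in L_r(0,\tau,L_p(\Gamma))$ there is a unique $\psi \in W^{1,r}(0,\tau,L_p(\Gamma)) \cap L_r(0,\tau,D(B))$ with $\psi' + B\psi = g$ and $\psi(0)=0$, together with the norm estimate, and $D(B) = D(\cn_V)$ with equivalent graph norms. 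To incorporate a nonzero initial value $\varphi_0$, I would split $\varphi = e^{-t\cn_V}\varphi_0 + \varphi_1$: the first term is the mild solution with zero right-hand side, which lies in $W^{1,r}(0,\tau,L_p(\Gamma)) \cap L_r(0,\tau,D(\cn_V))$ with the stated control by $\|\varphi_0\|_{(L_p(\Gamma),D(\cn_V))_{1-1/p,p}}$ precisely because the real interpolation space $(L_p,D(\cn_V))_{1-1/p,p}$ is the optimal trace space for the maximal regularity class (this is the standard characterization of traces of $W^{1,r} \cap L_r(D(\cn_V))$-functions, see e.g. the theory of Da Prato--Grisvard / Amann), and $\varphi_1$ is obtained from the zero-initial-value case applied to $f$. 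Uniqueness follows from uniqueness in the $\varphi_0 = 0$ case combined with the injectivity of $t \mapsto e^{-t\cn_V}$.

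The only genuine point requiring care — rather than an obstacle — is that Corollary~\ref{cdtnpcont310} gives the bound $\|B^{is}\| \le c_\nu e^{\nu|s|}$ for arbitrarily small $\nu$ but with $c_\nu$ depending on $\nu$; this is exactly the hypothesis $\omega_{\mathrm{BIP}}(B) = 0 < \pi/2$ needed for Dore--Venni, so no difficulty arises. Equivalently, one may bypass Dore--Venni entirely and appeal directly to the bounded $H^\infty(\Sigma(\theta))$-functional calculus of Corollary~\ref{cdtnpcont309} with $\theta < \pi/2$: a sectorial operator on a UMD space with bounded $H^\infty$-calculus of angle less than $\pi/2$ has maximal $L_r$-regularity for all $r \in (1,\infty)$ (Weis's theorem, via $\mathcal R$-sectoriality). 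Either route yields the corollary; I would present the $H^\infty$-calculus route as the cleanest, since Corollary~\ref{cdtnpcont309} is already in hand.
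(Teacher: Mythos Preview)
Your proposal is correct and follows essentially the same route as the paper: the authors' proof is a one-line appeal to the Dore--Venni theorem (using the bounded imaginary powers of Corollary~\ref{cdtnpcont310}) together with Lunardi's Proposition~1.2.10 for the handling of the initial datum via the real interpolation trace space, exactly as you outline. Your write-up is more detailed (the shift argument to pass from $\cn_V+\lambda I$ to $\cn_V$, the explicit splitting $\varphi = e^{-t\cn_V}\varphi_0 + \varphi_1$), and your alternative via Weis's $H^\infty$-calculus/$\mathcal R$-sectoriality theorem is a valid second route that the paper does not mention.
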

\begin{proof}
This is an application of the Dore--Venni theorem \cite{DV} Theorem~3.3
and \cite{Lun} Proposition~1.2.10.
\end{proof}

\subsection*{Acknowledgements} 
This work was carried out when the second named author was visiting the University of
Auckland and the first named author was visiting the University of Bordeaux.
Both authors wish to thank the universities for hospitalities.
The research of A.F.M. ter  Elst  is partly supported by the 
Marsden Fund Council from Government funding, 
administered by the Royal Society of New Zealand. 
The research of E.M.  Ouhabaz  is partly supported by the ANR 
project `Harmonic Analysis at its Boundaries',  ANR-12-BS01-0013-02.

\end{document}